\crefname{hypothesis}{Hypothesis}{Hypotheses}
\title{Harnessing structure\\ in composite nonsmooth minimization\thanks{Submitted to the editors June 27, 2022.}}
\author{Gilles Bareilles\thanks{Univ.\;Grenoble\;Alpes, LJK, France
  (\email{gilles.bareilles@univ-grenoble-alpes.fr}, \email{franck.iutzeler@univ-grenoble-alpes.fr}).}
\and Franck Iutzeler\footnotemark[2]
\and J\'er\^ome Malick\thanks{Univ.\;Grenoble\;Alpes, CNRS, Grenoble\;INP, LJK\;(\email{jerome.malick@univ-grenoble-alpes.fr}).}}
\colorlet{MyBlue}{DodgerBlue!75!Black}
\colorlet{MyGreen}{DarkGreen!85!Black}
\colorlet{MyGray}{White!75!Black}
\definecolor{chartreuse}{HTML}{288800}
\definecolor{dark_gray}{HTML}{111111}
\definecolor{light_gray}{HTML}{555555}
\definecolor{tomato}{HTML}{FF6347}
\definecolor{snow}{HTML}{F1F1F1}
\definecolor{whiteee}{HTML}{EEEEEC}
\definecolor{c1}{RGB}{238,102,119}
\definecolor{c2}{RGB}{68, 119, 170}
\definecolor{c3}{RGB}{102, 204, 238}
\definecolor{c4}{RGB}{34, 136, 51}
\definecolor{c5}{RGB}{204, 187, 68}
\definecolor{c6}{RGB}{238, 102, 119}
\definecolor{c7}{RGB}{170, 51, 119}
\definecolor{c8}{RGB}{187, 187, 187}
\def\addlegendimage{\csname pgfplots@addlegendimage\endcsname}
\newtheorem{assumption}[theorem]{Assumption}					%
\crefname{assumption}{assumption}{assumptions}
\newtheorem{property}[theorem]{Property}					%
\crefname{property}{property}{properties}
\newtheorem{example}[theorem]{Example}						%
\newcommand{\debug}[1]{#1}					%
\newcommand{\ie}{\emph{i.e.}\xspace}
\newcommand{\eg}{\emph{e.g.}\xspace}
\newcommand{\newmacro}[2]{\newcommand{#1}{\debug{#2}}}		%
\newcommand{\newop}[2]{\DeclareMathOperator{#1}{\debug{#2}}}		%
\newop{\argmin}{argmin}
\newop{\argmax}{argmax}
\newop{\Diag}{Diag}
\newop{\diag}{diag}
\newop{\supp}{supp}
\newop{\nullC}{null}
\newop{\trace}{trace}
\newop{\dist}{dist}
\newop{\rank}{rank}
\newop{\Aff}{Aff}
\newop{\Conv}{Conv}
\newop{\sign}{sign}
\newop{\proj}{proj}
\newop{\ri}{ri}
\newop{\intr}{int}
\newop{\cl}{cl}
\newop{\dom}{dom}
\newop{\Id}{I}
\renewcommand{\Im}{\mathrm{Im}}
\newmacro{\ball}{ \mathcal{B}}
\newmacro{\dd}{\mathrm{d}} %
\DeclarePairedDelimiterX{\product}[2]{\debug{\langle}}{\debug{\rangle}}{#1,#2}		%
\newmacro{\defeq}{\triangleq}
\newmacro{\RR}{\mathbb{R}}
\newmacro{\bbR}{\mathbb{R}}
\newmacro{\ndim}{n}
\newmacro{\Rn}{\RR^{\ndim}}
\newmacro{\RRext}{ \overline{\mathbb{R}}} %
\newmacro{\rdim}{\ndim_1}
\newmacro{\cdim}{\ndim_2}
\newmacro{\Rmat}{\RR^{\rdim\times\cdim}}
\newmacro{\NN}{\mathbb{N}}
\newmacro{\vx}{x}
\newmacro{\vy}{y}
\newmacro{\vxalt}{u}
\newmacro{\vyalt}{z}
\newmacro{\vxman}{\vx^{\M}}
\newmacro{\matx}{X}
\newmacro{\maty}{Y}
\newmacro{\vxbase}{ \bar{\vx} }
\newmacro{\vybase}{ \bar{\vy} }
\newmacro{\vsg}{  v}
\newmacro{\vsgbase}{ \bar{\vsg}}
\newmacro{\fixedp}{\bar{\vx}}
\newmacro{\fixedpalt}{\bar{\vxalt}}
\newmacro{\critp}{\bar{\vx}}
\newmacro{\B}{  \mathcal{B}}
\newmacro{\N}{  \mathcal{N}}
\newmacro{\nhd}{\mathcal{U}}
\newmacro{\Cfun}{G}
\newmacro{\impfun}{\mathrm{p}}
\ifdef{\C}{                          %
    \renewcommand{\C}{ \debug{\mathcal{C}}}
}{
    \newcommand{\C}{  \debug{\mathcal{C}}}
}
\newmacro{\fun}{  F}   %
\newmacro{\funalt}{ \tilde{F}}   %
\newmacro{\funman}{  f} %
\newmacro{\funs}{  f}   %
\newmacro{\funns}{  g}  %
\newmacro{\funcomp}{  F}  %
\newmacro{\mapping}{ c}
\newop{\codim}{codim}
\newmacro{\Mcodim}{k}  %
\newmacro{\maneq}{h}
\newmacro{\set}{ C}  %
\newmacro{\cvxparam}{ \alpha}  %
\newcommand{\opt}[1][\vx]{\debug{{#1}^\star}}		%
\newmacro{\step}{\gamma}
\newmacro{\iner}{\alpha}
\newmacro{\prox}{\mathbf{prox}}
\newmacro{\proxstep}{\step}
\newmacro{\PGop}{\mathsf{T}} %
\newmacro{\surr}{\rho}
\newmacro{\moreau}{e}
\newmacro{\ind}{ \mathrm{ind}}
\newmacro{\indicator}{ \iota}
\newmacro{\Lip}{L}
\newmacro{\Scvx}{\mu}
\newmacro{\rpb}{r_{pb}}
\newmacro{\rpr}{r}
\ifdef{\M}{                          %
    \renewcommand{\M}{ \debug{\mathcal{M}}}
}{
    \newcommand{\M}{  \debug{\mathcal{M}}}
}
\newmacro{\Mcrit}{\debug{\M_{\critp}}}
\newmacro{\dimman}{p}
\newcommand{\distM}[1][\M]{\dist_{#1}}
\newcommand{\projM}[1][\M]{\proj_{#1}}
\newmacro{\locparam}{\varphi}
\newmacro{\loceq}{\Phi}
\newmacro{\smoothcurve}{  c}
\newmacro{\geocurve}{  \gamma}
\newmacro{\grad}{  \operatorname{grad}}
\newmacro{\Hess}{  \operatorname{Hess}}
\newmacro{\R}{  \operatorname{R}}           %
\newmacro{\D}{  \operatorname{D}}           %
\newmacro{\Jac}{  \operatorname{Jac}}           %
\newmacro{\tang}{\eta} %
\newmacro{\tangentBundle}{  T \mathcal B}
\newcommand{\projT}[2]{
  \proj_{\tangent{#1}{#2}}
 }
\newcommand{\tangent}[2]{  \debug{T}_{#1} #2}
\newcommand{\tangentM}[1][]{%
  \ifthenelse{\isempty{#1}}{  \debug{T}_{\vx} \M}{  \debug{T}_{#1} \M}%
}
\newcommand{\tangentMstar}[1][]{%
  \ifthenelse{\isempty{#1}}{  \debug{T}_{\vx} \M^\star}{  \debug{T}_{#1} \M^\star}%
}
\newcommand{\normal}[2]{  \debug{N}_{#1} #2}
\newcommand{\normalM}[1][]{%
  \ifthenelse{\isempty{#1}}{  \debug{N}_{\vx} \M}{  \debug{N}_{#1} \M}%
}
\newmacro{\nM}{q}
\newmacro{\col}{\mathsf{C}}
\newmacro{\collong}{\mathsf{C}=\{\M_1,\ldots,\M_\nM\}}
\newmacro{\Str}{\mathsf{S}}
\newmacro{\ite}{k}
\newmacro{\initite}{0}
\newmacro{\afterinitite}{1}
\newmacro{\sumite}{n}
\newmacro{\Afterite}{K}
\newcommand{\prev}[1][\vx]{\debug{#1_{\ite\debug{-1}}}}		%
\newcommand{\curr}[1][\vx]{\debug{#1_{\ite}}}				%
\renewcommand{\next}[1][\vx]{\debug{#1_{\ite\debug{+1}}}}		%
\newmacro{\itealt}{\ell}
\newcommand{\init}[1][\vx]{\debug{#1_{\initite}}}			%
\newcommand{\Fsext}[1][\funcomp]{\debug{\tilde{#1}}} %
\newcommand{\currinter}[1][\vx]{\debug{#1^{\funns}_{\ite}}}
\newcommand{\currFsext}[1][\vx]{\debug{\tilde{#1}_{\ite}}}
\newmacro{\bigoh}{\mathcal{O}}
\newmacro{\linrate}{\beta}
\newmacro{\Prob}{\mathbb{P}}
\newmacro{\EE}{\mathbb{E}}
\newmacro{\FF}{\mathcal{F}}
\newmacro{\indata}{\mathbf{a}}
\newmacro{\outdata}{b}
\newmacro{\inset}{\mathcal{A}}
\newmacro{\outset}{\mathcal{B}}
\newmacro{\Distrib}{\mathcal{D}}
\newmacro{\pred}{p}
\newmacro{\predfun}{P}
\newmacro{\loss}{\ell}
\newmacro{\nSamples}{m}
\newmacro{\dataset}{\mathcal{S}_\nSamples}
\newmacro{\Fmapping}{\Phi}
\newmacro{\paramspace}{\Theta}
\newmacro{\risk}{R}
\newmacro{\regparam}{\lambda}
\newmacro{\reg}{\Omega}
\newmacro{\past}{g}
\newmacro{\activ}{\sigma}
\newmacro{\inputSpace}{\Rn}
\newmacro{\ndimalt}{m}
\newmacro{\Rnalt}{\RR^{\ndimalt}}
\newmacro{\interSpace}{\Rnalt}
\newmacro{\vxinter}{\vy}
\newmacro{\Minter}{\M^{\funns}}
\newmacro{\Mopt}{\opt[\M]}
\newmacro{\Mr}{\M^{\lambda_{\max}}_{\mult}}
\newmacro{\MI}{\M^{\max}_I}
\newmacro{\lammax}{\lambda_{\max{}}}
\newmacro{\mult}{r}
\newcommand{\crit}[1][\vx]{\debug{\bar{#1}}}				%
\newcommand{\Sym}[1][m]{\debug{\mathbb{S}_{#1}}}				%
\newop{\bnd}{bnd}
\newop{\rbd}{rbd}
\newmacro{\stepLow}{\underaccent{\bar}{\step}}
\newmacro{\stepUp}{\bar{\step}}
\newmacro{\Lagmult}{\lambda}
\newcommand{\dSQP}[1][\vx]{d^{\mathrm{SQP}}(#1)}
\newcommand{\dcorr}[1][\vx]{d^{\mathrm{corr}}(#1)}
\newcommand{\currdSQP}{d_{\ite}^{\mathrm{SQP}}(\curr[\vx])}
\providecommand{\leftsquigarrow}{%
  \mathrel{\mathpalette\reflect@squig\relax}%
}
\newcommand{\reflect@squig}[2]{%
  \reflectbox{$\m@th#1\rightsquigarrow$}%
}
\newmacro{\const}{C}
\newmacro{\constalt}{C'}
\newmacro{\constcurve}{\tilde{L}}
\newmacro{\conststep}{C''}
\newmacro{\Tcurve}{T}
\newmacro{\cm}{ e}
\newcommand{\cri}{\debug{c_{\text{ri}}}}
\newcommand{\cmap}{\debug{c_{\text{map}}}}
\newmacro{\vn}{v_n}
\newmacro{\vnalt}{\hat{v}_n}
\newmacro{\stepLowBnd}{\varphi}
\newmacro{\stepUpBnd}{\Gamma}
\newcommand{\includetikzfig}[2][1]{
  \resizebox{#1\textwidth}{!}{
    \includegraphics{#2.pdf}
  }
}
\newcommand{\includetikzfigcaption}[3][1]{
  \resizebox{#1\textwidth}{!}{
    \includegraphics{#2.pdf}
  }
  \caption{#3}
}
\newcommand{\includetikzexpenum}[2][1]{
  \resizebox{#1\textwidth}{!}{
    \includegraphics{#2.pdf}
  }
}
\newcommand{\includetikzexpenumcaption}[3][1]{
  \resizebox{#1\textwidth}{!}{
    \includegraphics{#2.pdf}
  }
  \caption{#3}
}
\begin{document}

\maketitle

\begin{abstract}
  We consider the problem of minimizing the composition of a nonsmooth function with a smooth mapping in the case where the proximity operator of the nonsmooth function can be explicitly computed.
  We first show that this proximity operator can provide the exact smooth substructure of minimizers, not only of the nonsmooth function, but also of the full composite function.
  We then exploit this proximal identification by proposing an algorithm which combines proximal steps with sequential quadratic programming steps.
  We show that our method \GBedit{locally} identifies the optimal smooth substructure and \FIedit{then} converges \GBreplace{locally }{}quadratically.
  We illustrate its behavior on two problems: the minimization of a maximum of quadratic functions and the minimization of the maximal eigenvalue of a parametrized matrix.
\end{abstract}

\begin{keywords}
  Nonsmooth optimization, proximal operator, partial smoothness, manifold identification, maximum eigenvalue minimization, sequential quadratic programming.
\end{keywords}

\begin{AMS}
  65K10, 90C26, 49Q12, 90C55.
\end{AMS}

\section{Introduction}

\subsection{Context: structured nonsmooth optimization}

In this paper, we consider nonsmooth optimization problems of the form
\begin{align}\label{eq:minF}
  \min_{\vx\in\Rn} ~\funcomp(\vx) \;\defeq \;\funns ( \mapping (\vx)),
\end{align}
where the inner mapping $\mapping: \Rn \to \Rnalt$ is smooth and the outer function $\funns: \Rnalt \to \RR\cup\{+\infty\}$ is nonsmooth and may be nonconvex, but admits an explicit proximity operator.
Such composite nonsmooth optimization problems appear in a variety of applications in signal processing, machine learning, and control, such as robust nonlinear regression, phase synchronization, nonsmooth penalty functions; see \eg\cite{lewis2016proximal,shapiroClassNonsmoothComposite2003} and the references therein.

Throughout the paper, we illustrate our developments on two classes of functions: the pointwise maximum of $m$ smooth real-valued functions $\mapping_i$
\begin{align}\label{example:maxofsmooth}
  \funcomp(\vx) = \!\!\max_{i=1,\ldots, m} (\mapping_i(\vx))
\end{align}
and the maximum eigenvalue of a parametrized symmetric real matrix $\mapping$ 
\begin{align}\label{example:eigmax}
  \funcomp(\vx) = \lammax (\mapping(\vx)).
\end{align}
In these two examples and many others, subgradients of $\funcomp$ can be computed and thus the composite function can be minimized using standard nonsmooth optimization algorithms (\eg\!subgradient methods, gradient sampling\;\cite{burke2020gradient}, nonsmooth BFGS\;\cite{lewisNonsmoothOptimizationQuasiNewton2013}, or bundle methods\;\cite{hiriart-lemarechal-1993}). 
Nevertheless, these methods do not exploit the fact that $\funcomp$ is a composition of a smooth mapping $\mapping$, 
which can hinder their performance. 
In contrast, the so-called prox-linear methods leverage this composite expression by introducing an extension of the proximity operator where the nonlinear mapping $\mapping$ is iteratively replaced by a first-order Taylor approximation~\cite{lewis2016proximal}.
These methods benefit from theoretical convergence guarantees, and nicely generalize to Taylor-like approximations~\cite{drusvyatskiyNonsmoothOptimizationUsing2021,bolteMultiproximalLinearizationMethod2020}.
However these methods are not always directly implementable because the prox-linear step may be hard to compute, as in~\eqref{example:eigmax}. %

In this paper, we propose an optimization algorithm for solving\;\eqref{eq:minF} exploiting that the nonsmooth objective function $\funcomp=\funns\circ\mapping$ writes as a composition \GBreplace{with a}{between a smooth mapping $\mapping$ and a} simple nonsmooth function $\funns$ which displays some smooth substructure, as discussed below.

\subsection{Smooth substructure, identification, and existing algorithms}

For many composite functions, including \eqref{example:maxofsmooth} and \eqref{example:eigmax}, the nondifferentiability points \FIedit{\emph{locally}} organize into \emph{smooth manifolds over which %
$\funcomp$ evolves smoothly}.
We illustrate in \Cref{fig:pairspaces} such a smooth substructure for a maximum of two functions.

\begin{figure}[h!]
  \centering
\medskip 
  \begin{subfigure}[t]{0.49\textwidth}
    \centering
    \includetikzfigcaption{pair_inputspace}{\vspace*{-2ex}Level curves of $\funcomp$\label{fig:pairInputSpace}}
  \end{subfigure}
  \hfill
  \begin{subfigure}[t]{0.49\textwidth}
    \centering
    \includetikzfigcaption{pair_interspace}{\vspace*{-2ex}Level curves of $\funns$ and image of $\mapping$\label{fig:pairInterSpace}}
  \end{subfigure}
  \vspace*{-2ex} 
  \caption{
    Smooth substructure on a simple example $(n = m = 2)$ \GBedit{with $\funns(y) = \max(y_{1}, y_{2})$ and $c(x) = (2.6\, x_{1}^{2} + 4\,(x_{2}-1)^{2} - 4, x_{1}^{2} + 4\,(x_{2}+1)^{2} - 4)$.
    The right-hand figure shows the level curves of $\funns$ %
    (in the intermediate space), and the left-hand figure shows the ones of $\funcomp$
    (in the input space).} The manifolds of non-differentiability \GBedit{of $F$ and $g$} are in green. \GBedit{The right figure also displays} the image of $c$ \GBreplace{is}{as} the red area.%
    \label{fig:pairspaces}
  }
  \vspace*{-2ex} 
\end{figure}

The smooth substructure of $\funcomp$ can help in solving \eqref{eq:minF}. %
Indeed, if the optimal solution\;$\opt[x]$ belongs to a manifold $\opt[\M]$ that is known beforehand, then minimizing the nonsmooth function $\funcomp$ over $\inputSpace$ boils down to minimizing the smooth restriction $\funcomp|_{\opt[\M]}$ over this smooth \emph{optimal manifold} $\opt[\M]$.
This would enable to solve \eqref{eq:minF} by smooth constrained optimization algorithms, such as Sequential Quadratic Programming (SQP) methods (see \eg\cite{nocedal2006numerical,bonnans2006numerical}).\footnote{
  \FIedit{
    Note that $\opt[\M]$ is an arbitrary manifold and thus computing a feasible point is already a difficult task in general.
    That is why we consider in this paper infeasible methods, such as SQP, instead of feasible ones, like the Riemannian Newton algorithm.
  }
}
The main difficulty in practice is that \emph{we do not know $\opt[\M]$ in advance}.

Thus, the algorithms exploiting this smooth substructure %
require two %
ingredients: 
\begin{itemize}
  \item[i)] a mechanism to identify the optimal manifold;
  \item[ii)] an efficient method to minimize $\funcomp$ restricted to this manifold.
\end{itemize}

\smallskip

For general convex functions, the algorithm of \cite{mifflin2005algorithm} 
mixes a proximal bundle iteration (as a heuristic for 
identification) and
a so-called $\mathcal{U}$-Newton iteration (which interprets as an SQP step; see \cite[Sec.\;5]{miller2005newton}).
The obtained superlinear rate hinges on the %
identification of the optimal manifold.

For max-of-smooth functions~\eqref{example:maxofsmooth}, the paper \cite{womersleyAlgorithmCompositeNonsmooth1986} pioneered the idea of seeking the optimal manifold and using it to make second-order steps. Their identification heuristic uses the indices of the maximal function along a descent direction. Recently, \cite{lewisSimpleNewtonMethod2019, hanSurveyDescentMultipoint2021} investigate a related setting and propose bundle-like algorithms incorporating high-order information that converge (super)linearly on max-of-smooth functions when %
the optimal manifold is known.

For the maximum eigenvalue of a parametrized matrix \eqref{example:eigmax}, a specific version of the $\mathcal{U}$-Newton method discussed above is studied by \cite{noll2005spectral}. Again, the identification mechanism is a heuristic determining the multiplicity of the maximal eigenvalue and the optimization step is an SQP iteration.

None of these methods guarantee identification of the optimal manifold: they either assume that the optimal manifold is known in advance, or rely on heuristics for identification. Here, we aim at further harnessing the smooth substructure of $\funcomp=\funns\circ\mapping$ to have \emph{guaranteed local identification} of the optimal manifold and then \emph{guaranteed quadratic convergence} when using SQP iterations.

\subsection{Contributions and outline}
We propose a \GBedit{local} second-order algorithm for solving the nonsmooth composite problem \eqref{eq:minF} that identifies
the optimal manifold of non-differentiability. The two main ingredients of our algorithm are the following:
\begin{itemize}
  \item[i)]  we use the explicit proximal operator of $\funns$ with chosen stepsizes to provide a guaranteed identification procedure;
  \item[ii)] for a candidate manifold $\M$, we make an SQP iteration minimizing a smooth extension of $\funcomp\vert_{\M}$ subject to the constraint of belonging to $\M$.
\end{itemize}

\smallskip
The fact \GBedit{that} proximal-based operators have identification properties around minimizers is well-known: the proximal operator\;\cite{hare2004identifying,daniilidis2006geometrical}, the proximal gradient operator\;\cite{bareilles2020newton}, approximate variable-metric proximal gradient operators\;\cite{leeAcceleratingInexactSuccessive2021}, and prox-linear operators\;\cite{lewis2016proximal} locally identify the optimal manifold under some natural geometrical assumptions.
Here, we only have access to the proximity operator of $\funns$, and in order to exploit the structure it provides, we face the double challenge of, first, identifying the smooth structure around a point which is not a minimizer for $\funns$, and, second, deducing the corresponding structure of $\funcomp = \funns\circ\mapping$.
Thus, our main technical contribution is to establish that $\prox_{\step \funns}$ maps a point $\vxinter$ close to $\mapping(\opt)$ to $\mapping(\opt[\M])$.
The step $\step$ should be carefully chosen, in particular larger than the distance of $\vxinter$ to $\mapping(\opt)$.
Mathematically, we study the range of steps for which the curve $\step \mapsto \prox_{\step \funns}(\vxinter)$ belongs to $\mapping(\opt[\M])$.
This analysis shows connections with recent works in nonsmooth analysis, such as the modulus of identifiability appearing in \cite{lewis2022identifiability}.

We combine this new identification result with standard SQP-steps to propose \GBreplace{an}{a local} algorithm for minimizing the composite function $\funcomp$.
We pay a special attention to prevent the quadratic convergence of SQP from jeopardizing identification: we prove that, for a well-chosen stepsize policy, the method identifies the optimal structure and \GBreplace{locally }{}converges quadratically.
We illustrate numerically these properties on problems of the form \eqref{example:maxofsmooth} and \eqref{example:eigmax}.

The outline of the remainder of the paper is as follows. First, in \Cref{sec:asm}, we introduce the technical tools to describe the manifold identification brought by proximity operators (including prox-regularity and partial smoothness). Furthermore, we lay out two technical properties needed for proximal identification in the composite setting.
In \Cref{sec:identif-prox}, we show our main result consisting in a description of a stepsize range for which the proximity operator of $\funns$ identifies the optimal manifold locally around a minimizer.
In \Cref{sec:method} we detail the proposed method combining SQP-steps and proximal identification steps. %
Finally, we present in \Cref{sec:numexps} numerical illustrations of our method and of the identification result.

\subsection{Notations}

\GBedit{
  Given a point $z$ in $\bbR^{p}$, we denote by $\N_{z}$ a neighborhood of $z$ in $\bbR^{p}$.
  We reserve the names based on $\vx$ for points in the input space $\bbR^{n}$, and on $\vxinter$ for points in the intermediate space $\bbR^{m}$.
  We denote the differential of a smooth mapping $\mapping$ at point $\vx$ by $\D \mapping(\vx)$, and its Jacobian matrix by $\Jac_{\mapping}(\vx)$.
  The conjugate of the linear operator $A$ is denoted by $A^{*}$.
  The minimizer is denoted by $\opt[\vx]$.
}

\section{Setting and assumptions}
\label{sec:asm}

Let us start by representing schematically the type of functions we consider:
\begin{align}
  \inputSpace \xrightarrow[\text{smooth mapping}]{\mapping} \Im(\mapping) \subset \interSpace  \xrightarrow[\text{nonsmooth function}]{\funns} \bbR\cup\{+ \infty\}.
\end{align}
Throughout the paper, we denote by $\vx$ points in the \emph{input space} $\inputSpace$ and by $\vxinter$ points in the \emph{intermediate space} $\interSpace$.

In all the results presented in this paper, we make the following assumption that describes the minimal global properties on $\funns$ and $\mapping$ to conduct our reasoning.
\begin{assumption}\label{asm:blanket}
  The mapping $\mapping:\inputSpace\to\interSpace$ is $\mathcal{C}^2$, the function $\funns:\interSpace\to\bbR\cup \{+\infty\}$ is proper and lower semi-continuous.
\end{assumption}

We work with the set of \emph{(general) subgradients} (see~\cite[Def.\;8.3]{rockafellar2009variational}), defined at a point $\crit[\vxinter]$ where $\funns(\crit[\vxinter])$ is finite as:
\begin{align}
    \partial \funns(\crit[\vxinter]) \defeq \left\{ \lim_r v_r : v_r\in\widehat\partial \funns(\vxinter_r), \vxinter_r\to\crit[\vxinter], \funns(\vxinter_r)\to \funns(\crit[\vxinter]) \right\},
\end{align}
where $\widehat\partial \funns(\crit[\vxinter])$ denotes the set of \emph{regular (or Fréchet) subgradients}, defined as
\begin{align}
    \widehat\partial \funns(\crit[\vxinter]) \defeq \left\{ v : \funns(\vxinter) \ge \funns(\crit[\vxinter]) + \langle v, \vxinter-\crit[\vxinter] \rangle + o(\|\vxinter-\crit[\vxinter]\|) \text{ for all } \vxinter\in\interSpace \right\}.
\end{align}
These two subdifferentials match if (and only if) $\funns$ is (Clarke) regular at $\crit[\vxinter]$. Closed convex functions are regular everywhere and these subdifferentials match the usual convex subdifferential (see~\cite[Chap.\;8.11-12]{rockafellar2009variational} for details).

In the remainder of this section, we provide quick recalls and definitions about the two important objects of our analysis: the proximity operator in \Cref{sec:defsnsfun} and the structure manifolds in \Cref{sec:structmanifolds}. We illustrate them on our running examples \eqref{example:maxofsmooth} and \eqref{example:eigmax}.

\subsection{Proximity operator}%
\label{sec:defsnsfun}

The proximity operator of a function $\funns$ with step $\step > 0$ at $\vxinter\in\interSpace$ is defined as the set-valued mapping
\begin{align}
    \prox_{\step \funns}(\vxinter) \defeq \argmin_{u\in\interSpace} \left\{ \funns(u) + \frac{1}{2\step}\|u-\vxinter\|^2\right\}.
\end{align}
This operator is well-defined when $\funns$ is prox-regular and prox-bounded; see\;\eg\cite[13.37]{rockafellar2009variational}. %
We quickly introduce these two notions %
and recall a result on the uniqueness and characterization of the prox operator, which is %
important in our developments.

A function $\funns$ is \emph{prox-regular} at a point $\crit[\vxinter]$ for a subgradient $\bar{v} \in \partial \funns(\crit[\vxinter])$ if $\funns$ is finite, locally lower semi-continuous at $\crit[\vxinter]$, and there exists $r>0$ and $\epsilon>0$ such that
\begin{align}\label{eq:proxregdef}
  \funns(\vxinter') \ge \funns(\vxinter) + \langle v, \vxinter'-\vxinter\rangle - \frac{r}{2}\|\vxinter'-\vxinter\|^2
\end{align}
whenever $v \in \partial \funns(\vxinter)$, $\|\vxinter-\crit[\vxinter]\|<\epsilon$, $\|\vxinter'-\crit[\vxinter]\|<\epsilon$, $\|v-\bar{v}\|<\epsilon$, and $\funns(\vxinter)<\funns(\crit[\vxinter])+\epsilon$.
When this holds for all $\bar{v} \in\partial \funns(\crit[\vxinter])$, we say that $\funns$ is prox-regular at $\crit[\vxinter]$~\cite[Def. 13.27]{rockafellar2009variational}.

A function $\funns$ is \emph{prox-bounded} if there exists $R\ge 0$ such that the function $\funns + \frac{R}{2}\|\cdot\|^{2}$ is bounded below.
The corresponding \emph{threshold} (of prox-boundedness) is the smallest $r_{pb}\ge 0$ such that $\funns + \frac{R}{2}\|\cdot\|^{2}$ is bounded below for all $R>r_{pb}$.
In this case, $\funns + \frac{R}{2}\|\cdot - \crit[\vxinter]\|^{2}$ is bounded below for any $\crit[\vxinter]$ and $R > r_{pb}$~\cite[Def.\;1.23, Th.\;1.25]{rockafellar2009variational}.

We can now recall a relevant result on the characterization of proximal points.
\begin{proposition}[{\cite[Th.\;1]{hare2009computing}}]\label{prop:wellbehavedprox}
  Suppose that the function $\funns$ is prox-regular at $\crit[\vxinter]$ for $\bar{v} \in \partial \funns(\crit[\vxinter])$ with parameter $r_{pr}$, and prox-bounded with threshold $r_{pb}$.
  Then, for any $\step < \min(r_{pr}^{-1}, r_{pb}^{-1})$ and all $\vy$ near $\crit[\vxinter] + \step \bar{v}$, the proximal operator is:
  \begin{itemize}
    \item single-valued and locally Lipschitz continuous;
    \item uniquely determined by the relation
          \begin{align}
            p = \prox_{\step \funns}(\vy) \Leftrightarrow \vy - p \in \step \partial \funns(p).
          \end{align}
  \end{itemize}
\end{proposition}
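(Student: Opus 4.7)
My plan is to establish the four conclusions in sequence---existence of a proximal point near $\crit[\vxinter]$, the subdifferential characterization, uniqueness, and local Lipschitz continuity---with prox-regularity carrying the last two via a single monotonicity-style computation.

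First I would handle existence and localization. Prox-boundedness with threshold $r_{pb}$ together with the hypothesis $\step < r_{pb}^{-1}$ make the objective $\funns(\cdot) + (2\step)^{-1}\|\cdot - \vxinter\|^{2}$ coercive, so lower semi-continuity of $\funns$ yields a global minimizer $p$. The real subtlety is to show that, for $\vxinter$ near $\crit[\vxinter] + \step \bar{v}$, this minimizer lies in the neighborhood of $\crit[\vxinter]$ where prox-regularity is available, with its associated subgradient $v = \step^{-1}(\vxinter - p)$ close to $\bar{v}$, and with $\funns(p) < \funns(\crit[\vxinter]) + \epsilon$. I would obtain this by comparing the optimal value of the subproblem to its value at the candidate $\crit[\vxinter]$, which is near-optimal because the Fermat condition $\vxinter - \crit[\vxinter] \in \step \partial \funns(\crit[\vxinter])$ is approximately satisfied when $\vxinter \approx \crit[\vxinter] + \step \bar{v}$.

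Second, the characterization $p = \prox_{\step \funns}(\vxinter) \Leftrightarrow \vxinter - p \in \step \partial \funns(p)$ comes from Fermat's rule applied to the proximal subproblem: at any local minimizer, the sum rule for a smooth quadratic added to a lsc function gives $0 \in \partial \funns(p) + \step^{-1}(p - \vxinter)$, which rearranges to the claimed inclusion. The converse direction, showing that any solution of the inclusion is actually the global minimum, uses prox-regularity combined with the strong convexity (modulus $\step^{-1} > r_{pr}$) of the quadratic penalty to produce a global positive quadratic lower bound on the objective centered at $p$.

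The heart of the proof is a single positivity computation that simultaneously yields uniqueness and the Lipschitz estimate. If $p_{1}, p_{2}$ solve the inclusion at nearby inputs $\vxinter_{1}, \vxinter_{2}$ and both lie in the prox-regularity neighborhood, set $v_{i} = \step^{-1}(\vxinter_{i} - p_{i}) \in \partial \funns(p_{i})$. Applying \eqref{eq:proxregdef} twice (interchanging the roles of $p_{1}$ and $p_{2}$) and adding the resulting inequalities yields
\begin{align}
\langle v_{1} - v_{2}, p_{2} - p_{1}\rangle \le r_{pr}\|p_{1} - p_{2}\|^{2}.
\end{align}
Substituting $v_{1} - v_{2} = \step^{-1}\bigl((\vxinter_{1} - \vxinter_{2}) - (p_{1} - p_{2})\bigr)$, collecting the $\|p_{1}-p_{2}\|^{2}$ term on the left, and applying Cauchy--Schwarz gives $(\step^{-1} - r_{pr})\|p_{1} - p_{2}\| \le \step^{-1}\|\vxinter_{1} - \vxinter_{2}\|$. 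The prefactor is strictly positive since $\step < r_{pr}^{-1}$, delivering uniqueness (take $\vxinter_{1} = \vxinter_{2}$) and local Lipschitz continuity with constant $(1 - \step\, r_{pr})^{-1}$ in one stroke.

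The main obstacle I anticipate is \emph{not} this monotonicity computation---that is routine once prox-regularity is in force---but rather the bookkeeping in the localization step: confirming that the global minimizer of the perturbed subproblem satisfies all three localization conditions of the prox-regularity definition simultaneously ($\|p - \crit[\vxinter]\| < \epsilon$, $\|v - \bar{v}\| < \epsilon$, and $\funns(p) < \funns(\crit[\vxinter]) + \epsilon$). Coercivity from prox-boundedness only gives existence with a coarse norm bound; one must further exploit the smallness of $\vxinter - (\crit[\vxinter] + \step \bar{v})$ to push the minimizer, its implied subgradient, and its function value into the required neighborhoods before the prox-regularity inequality can be invoked.
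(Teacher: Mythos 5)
The paper gives no proof of this proposition: it is quoted directly from \cite{hare2009computing} (Theorem 1 there, which in turn builds on Poliquin--Rockafellar's work on prox-regularity), so there is no in-paper argument to compare against. Your plan reproduces the standard proof of that result faithfully and correctly: coercivity from prox-boundedness (valid since $\step^{-1}>r_{pb}$) gives existence, Fermat's rule gives the characterization, and the doubled prox-regularity inequality gives $(\step^{-1}-r_{pr})\|p_{1}-p_{2}\|\le\step^{-1}\|\vxinter_{1}-\vxinter_{2}\|$, which indeed delivers single-valuedness and the Lipschitz constant $(1-\step\,r_{pr})^{-1}$ in one stroke. The only caveat is one you already flag yourself: the localization step (pushing the global minimizer, its implied subgradient $\step^{-1}(\vxinter-p)$, and its function value into the $\epsilon$-window where \eqref{eq:proxregdef} applies) is where the cited proof spends essentially all of its effort, and it is also needed to upgrade the converse of the characterization from a local to a global minimality statement; your sketch names this obstacle accurately but does not carry it out.
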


In addition to its existence and characterization provided by the result above, the proximity operator has a closed-form expression in our running examples.

\begin{example}[Maximum]
  The subdifferential of $\funns(\vxinter) = \max(\vxinter_{1}, \ldots, \vxinter_{\ndimalt})$ is
  \begin{align}
    \partial \max (\vy) = \Conv \left\{ e_i : \vy_i = \max(\vy) \right\},
  \end{align}
  where $e_i $ is the $i$-th element of the Cartesian basis of $\interSpace$.
  \GBreplace{This}{The $\max$} function is convex, thus globally prox-regular and prox-bounded everywhere (with parameters $0$).
  Its proximity operator is given (coordinate-wise) by
  \begin{align}
    \left[ \prox_{\step \max}(\vy) \right]_i = \begin{cases}
      s & \text{if } \vy_i > s \\
      \vy_i & \text{else}
    \end{cases}
  \end{align}
  where $s$ is the unique real number such that $\sum_{\{i : \vy_i>s\}} (\vy_i - s) =
  \step$.
\end{example}

\begin{example}[Maximum eigenvalue]
  Denote the eigenvalue decomposition of a point $\vxinter\in\Sym$ as $\vxinter = E\Diag(\lambda) E^\top$, where $\lambda\in\bbR^{m}$ is a vector with decreasing entries and $E\in\bbR^{m \times m}$ an orthogonal matrix.
  The subdifferential of the maximum eigenvalue at $\vxinter$ writes~\cite[Ex.~3.6]{lewis2002active}
  \begin{align}
    \partial \lammax(\vxinter) = \{E_{1:r}ZE_{1:r}^{\top}, Z\in\Sym[r], Z\succeq 0, \trace Z=1\}
  \end{align}
  where $r$ is the multiplicity of the maximum eigenvalue of $\vxinter$.
  \GBreplace{This}{The $\lammax$} function is convex, thus prox-regular and prox-bounded (with parameters $0$).
  Its proximity operator can be expressed using the one of the $\max$ function as
  \begin{align}
    \prox_{\step \lammax}(\vy) = E \Diag(\prox_{\step \max{}}(\lambda)) E^\top.
  \end{align}
\end{example}

\subsection{Structure manifolds}%
\label{sec:structmanifolds}

We now specify the notion of structure manifold in relation with a nonsmooth function $\funns$.

A subset $\M$ of $\RR^n$ is said to be a \emph{$p$-dimensional $\C^2$-submanifold} of $\RR^n$ around $\crit[\vx] \in \M$ if there exists a $\C^2$ \emph{manifold-defining map} $\maneq : \RR^n \to \RR^{n-p}$ with a surjective derivative at $\crit[\vx] \in \M$ that satisfies for all $\vx$ close enough to $\crit[\vx]$: $\vx \in\M \Leftrightarrow \maneq(\vx) = 0$.
We define the tangent and normal spaces at a point $\vx\in\M$ as follows:
\begin{align}
    \tangent{\vx}{\M} = \ker \D\maneq (\vx) \qquad \normal{\vx}{\M} = \Im\; \D\maneq (\vx)^*
\end{align}

The important notion of \emph{structure manifolds of $\funns$} can be defined as a manifold $\Minter$ where $\funns$ is nondifferentiable. More precisely, at a point  $\crit[\vxinter]\in\Minter$, we require $\funns$ to be prox-regular and \emph{partly smooth}. 
This property of ($\C^2$-)partial smoothness is verified at a point $\crit[\vxinter]$ for a function $\funns$ relatively to a set $\Minter$ containing $\crit[\vxinter]$ if $\Minter$ is a $\C^2$ manifold around $\crit[\vxinter]$ and if
\begin{itemize}
  \item (smoothness) the restriction of $\funns$ to $\Minter$ is a $\C^2$ function near $\crit[\vxinter]$;
  \item (regularity) $\funns$ is (Clarke) regular at all points $\vxinter\in\Minter$ near $\crit[\vxinter]$, with $\partial \funns(\vxinter)\neq\emptyset$;
  \item (sharpness) the affine span of $\partial \funns(\crit[\vxinter])$ is a translate of $\normal{\crit[\vxinter]}{\Minter}$;
  \item (sub-continuity) the mapping $\partial \funns$ restricted to $\Minter$ is continuous at~$\crit[\vxinter]$.
\end{itemize}

The concept of partial smoothness, introduced in~\cite{lewis2002active}, captures (locally) well-behaved nonsmoothness by requiring $\funns$ to be smooth along a manifold and non-smooth across it.
In addition, the prox-regularity of $\funns$ ensures \GBreplace{unicity}{uniqueness} of the structure manifold near $\crit[\vxinter]$~\cite[Corollary 4.2, Example 7.1]{hare2004identifying}.
To highlight the relation between the manifold and the function $\funns$, we use the notation $\Minter$ for the structure manifold related to $\funns$.

\begin{example}\label{ex:maxofsmoothstruct}
  The structure manifolds of $\max$ are
  \begin{align}
    \MI = \{ \vy \in \interSpace : \vy_i = \max(\vy) \text{ for } i\in I \},
  \end{align}
  where $I \subset \{1, \ldots, m\}$.
  A smooth manifold-defining map for $\MI$ is $\maneq:\interSpace\to \bbR^{|I|-1}$ such that $\maneq(\vxinter)_{l} = \vxinter_{i_{l}} - \vxinter_{i_{|I|}}$, where $|I|$ denotes the size of $I$ and $i_{l}$ the $l$-th element of $I$ (with some ordering).
  As required, this map is surjective.
  At any point $\vxinter \in \interSpace$, the maximum is partly smooth relative to $\MI$, where $I = \{i : \vxinter_{i} = \max(\vxinter)\}$.
\end{example}

\begin{example}\label{ex:eigmaxstruct}
  The structure manifolds of $\lammax$ in $\Sym$ consist of all matrices having a largest eigenvalue with fixed multiplicity $\mult$:
  \begin{align}
    \Mr = \{\vxinter \in \Sym : \lambda_1(\vxinter) = \dots = \lambda_r(\vy)\}.
  \end{align}
  A manifold-defining map of $ \Mr $ is described in~\cite{shapiroEigenvalueOptimization1995a} and $\lammax$ is partly smooth relative to $\Mr$ at any point $\vxinter \in \Mr$. 
\end{example}

In view of the expression of the proximity operators in our examples, their output naturally lie on the structure manifolds described above. More precisely, $\prox_{\step \max}(\vxinter)$ belongs to the structure manifold $\MI$, where $I$ collects the indices of the $k$ largest entries of $\vxinter$ and $k$ grows as $\step$ increases. Similarly, $\prox_{\step \lammax}(y)$ belongs to the structure manifold $\Mr$, where $r$ increases as $\step$ does. This observation is at the core of the ability of proximal operators to identify neighboring structure manifolds.

\subsection{Structure Identification}%
\label{subsec:addasm}

It is well-known that the proximity operator identifies structure locally around critical points (see \eg~\cite[Th.\;28]{daniilidis2006geometrical}): all points near a minimizer are mapped to the manifold containing the minimizer. Furthermore, this structure is revealed during the computation of the operator.\footnote{Computing exactly the \emph{structure} of the output point of the operator, as can be done for the prox, is opposed to merely observing the structure of the output after its computation. This last option is not desirable in our opinion as it entails delicate numerical questions such as testing equality between reals for the maximum, or computing the multiplicity of the maximal eigenvalue of a matrix.}

In the situation we consider, the proximity operator of $\funcomp$ cannot be explicitly computed. However, $\prox_{\step \funns}$ is available and can provide some structure in the intermediate space $\interSpace$ that we would like to exploit. To do so, we introduce two properties (holding on our two running examples), that will allow us to retrieve the structural information in the intermediate space near points that are not minimizers of $\funns$.

The first property holds at point $\crit[\vxinter]\in\Minter$ if the nonsmooth function $\funns$ strictly increases on all directions on which it is nonsmooth.
\begin{property}[Normal ascent]\label{prop:normalascent}
  A function $\funns$ satisfies the \emph{normal ascent} property at point $\crit[\vxinter]$ if $0$ lies in the relative interior of the projection of $\partial \funns (\crit[\vxinter])$ on the normal space at $\crit[\vxinter]$, that is:
    $0 \in \ri \proj_{\normal{\crit[\vxinter]}{\Minter}} \partial \funns(\crit[\vxinter])$.
\end{property}

\begin{remark}[Positive directional derivative]
  In a ``nice'' setting where $\funns$ is Lipschitz continuous and regular at $\crit[\vxinter]$, \Cref{prop:normalascent} implies that the directional derivative of $\funns$ along any normal direction $d\in \normal{\crit[\vxinter]}{\Minter}$ is positive.
  Indeed, in that case one-sided directional derivatives are well-defined \cite[p.\;358, Th.\;9.16]{rockafellar2009variational}, and the derivative along direction $w$ equals $\max_{v\in\partial \funns(\GBtwoedit{\crit[\vxinter]})} \langle v, w \rangle$.
  Along a normal direction $d \in \normal{\GBtwoedit{\crit[\vxinter]}}{\Minter}$, by partial smoothness the directional derivative writes $\max_{\vn\in \proj_{\normal{\GBtwoedit{\crit[\vxinter]}}{\Minter}} (\partial \funns( \GBtwoedit{\crit[\vxinter]}))} \langle \vn, d \rangle$.
    \Cref{prop:normalascent} ensures the existence of $\alpha >0$ such that $\alpha d \in \proj_{\normal{\GBtwoedit{\crit[\vxinter]}}{\Minter}} (\partial \funns(\GBtwoedit{\crit[\vxinter]}))$, making the derivative positive.

\end{remark}

\FIedit{Let us briefly discuss that, even if \Cref{prop:normalascent} may look strong, in practice it is not.
  For a given nonsmooth function $\funcomp$ which can be decomposed as $\funcomp = \funns\circ \mapping$, \Cref{prop:normalascent} may not hold for $\funns$ at $\mapping(\opt)$ for a minimizer $\opt$.
  Nevertheless, the property often holds for a different decomposition $\funcomp = \tilde{\funns} \circ \tilde{\mapping}$.
  We give two examples where changing the decomposition of $\funcomp$ ensures that \Cref{prop:normalascent} holds at minimizers.

  \begin{example}[Normal ascent for regularized-type problem]
    Consider the minimization of $F(x) = f(x) + r(x)$, where $f(x) = \frac{3}{2}x$ and $r(x) = |x| - \frac{1}{2}x$, whose minimizer is $\opt=0$.
    This writes as a composite problem by setting $\mapping(x) = (f(x), x)$ and $\funns(y) = y_{1} + r(y_{2})$. We note first that 
    \Cref{prop:normalascent} does not hold for $\funns$ at $\mapping(\opt)$, the structure manifold of $g$ at $\mapping(\opt)$ being $\Minter = \bbR \times \{0\}$.
    However the function also writes $\funcomp(x) = \tilde{f}(x) + \tilde{r}(x)$, with $\tilde{f}(x) = x$ and $\tilde{r}(x) = |x|$.
    Letting similarly $\tilde{\mapping}(x) = (\tilde{f}(x), x)$ and $\tilde{g}(y) = y_{1} + \tilde{r}(y_{2})$, we now get that \Cref{prop:normalascent} holds for $\tilde{g}$ at $\tilde{\mapping}(\opt)$.
  \end{example}

  \begin{figure}[t]
    \centering
    \includetikzfig[.35]{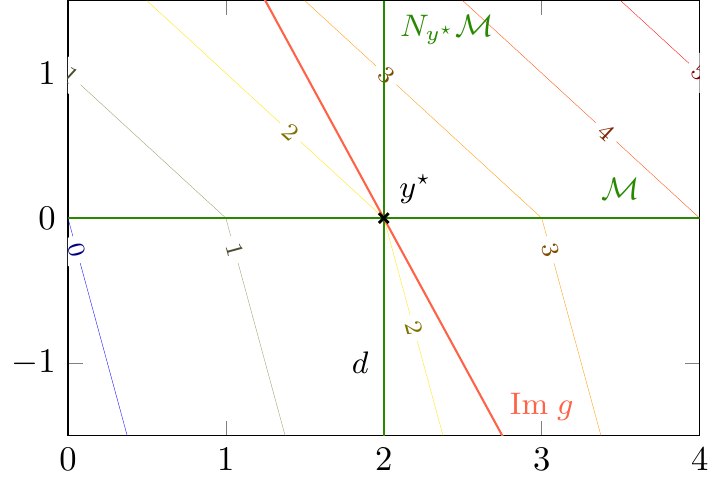}
    \caption{
      \GBedit{
      Illustration of the level-curves of function $g$ in \cref{ex:normalascentcompo}, along with the image of $\mapping$ and the tangent and normal spaces to $\Minter$ at the minimizer.}%
      \label{fig:normalgrowthex}
      \vspace*{-4ex}
    }
  \end{figure}
  \begin{example}[Normal ascent property for composite problems]%
    \label{ex:normalascentcompo}
    Consider the minimization of $\funcomp = \funns\circ \mapping$, with
    \begin{align}
      \funns(\vy) =
      \begin{cases}
        \vy_{1}+\vy_{2} &\text{if } \vy_{1}>0 \\
        \vy_{1} + 0.25\,\vy_{2} &\text{else}
      \end{cases}, \qquad \mapping(\vx) = \begin{pmatrix}
                                       2 - \vx \\ 2\vx
                                     \end{pmatrix}.
    \end{align}
    The minimizer is $\opt[\vx] = 0$, since $\funns$ is strictly increasing at all $\vxinter\in \Im (\mapping)$ near $\opt[\vxinter] = \mapping(\opt[\vx])$; see \cref{fig:normalgrowthex}.
    However the normal ascent property does not hold at $\opt$: $\funns$ is decreasing at $\opt[\vxinter]$ along the normal direction $(0 ; -1)$.

    The composite function boils down to $\funcomp(\vx) = 2+\max(\vx, -0.5\vx) = \tilde{\funns}\circ \tilde{\mapping}(\vx)$, where $\tilde{\funns}(\vxinter) = 2+\max(\vxinter)$ and $\tilde{\mapping}(\vx) = (\vx, -0.5\vx)$.
    With this decomposition, $\tilde{\funns}$ does satisfy the normal ascent property at $\opt[\vx]$.
  \end{example}
}

The second property is more technical and controls the velocity of a curve  on the manifold $\Minter$.

\begin{property}[Curve property]\label{prop:curve}
  A function $\funns$ partly smooth at $\crit[\vxinter]$ relative to $\Minter$ satisfies the \emph{curve property} at $\crit[\vxinter]$ when there exists a neighborhood $\N_{\crit[\vxinter]}$ of $\crit[\vxinter]$ and $\Tcurve>0$ such that any smooth application $\cm:\N_{\crit[\vxinter]}\times [0, \Tcurve]\to\Minter$ such that $\cm(\vxinter,0) = \projM[\Minter](\vxinter)$, $\frac{\dd}{\dd t}e(\vxinter, t)\vert_{t=0} = -\grad \funns(\projM[\Minter](\vxinter))$ satisfies
  \begin{align}
    \| \proj_{\normal{\cm(\vxinter, t)}{\Minter}}(\cm(\vxinter, t) - \vxinter) \| \le \distM[\Minter](\vxinter) + \constcurve \; t^{2} \quad \text{ for all } \vxinter\in\N_{\crit[\vxinter]}, t \in [0, \Tcurve],
  \end{align}
  where $\distM[\Minter](\vxinter)\defeq \|\vxinter - \projM[\Minter](\vxinter)\|$ is the distance between $\Minter$ and $\vxinter$, and $\grad \funns(p) \in \tangent{p}{\Minter}$ denotes the Riemannian gradient of $\funns$ obtained as $\proj_{\tangent{p}{\Minter}}( \partial \funns(p))$.
\end{property}

The idea behind this property is to ensure that the differential of the \GBreplace{projection of the (time dependent)}{(time dependent) projection on the} normal space is (uniformly) negligible at time $0$.
Note that for affine spaces, we trivially have $ \| \proj_{\normal{\cm(\vxinter, t)}{\Minter}}(\vxinter - \cm(\vxinter, t)) \| = \distM[\Minter](\vy) $ for all $t$ near $0$: the normal spaces are equal at all points of the manifold.

These two properties are satisfied at any structured point for the two nonsmooth functions $\max$ and $\lammax$ of our running examples as detailed in the following lemma. The proofs for the two functions are rather direct but require precise technical descriptions; we defer them to \Cref{sec:addasmproof}.

\begin{lemma}\label{lemma:props}
  Consider either:
  \begin{itemize}
    \item $\funns = \max$, $\crit[\vxinter] \in \interSpace$, and the structure manifold $\MI$ (of~\Cref{ex:maxofsmoothstruct});
    \item $\funns = \lammax$, $\crit[\vxinter] \in \Sym$, and the structure manifold $\Mr$ (of~\Cref{ex:eigmaxstruct}).
  \end{itemize} 
  Then, \Cref{prop:normalascent,prop:curve} hold at $\crit[\vxinter]$.
\end{lemma}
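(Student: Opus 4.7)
The plan is to verify each of the two properties for each of the two running examples, relying on the explicit descriptions of $\partial \funns$ in \cref{ex:maxofsmoothstruct,ex:eigmaxstruct} together with classical computations of the tangent and normal spaces of the corresponding structure manifolds.

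For $\funns=\max$ at $\crit[\vxinter]\in\MI$, the defining map of \cref{ex:maxofsmoothstruct} shows that $\MI$ is an affine subspace, with normal space $\{v\in\interSpace : v_i = 0\text{ for }i\notin I,\ \sum_{i\in I} v_i = 0\}$. Since $\partial \max(\crit[\vxinter]) = \Conv\{e_i : i\in I\}$, projecting onto $N_{\crit[\vxinter]}\MI$ amounts to subtracting the mean $1/|I|$ from each coordinate indexed by $I$: the image is the simplex on $I$ shifted so its centroid sits at the origin. The origin is therefore attained at the uniform weight $\alpha_i=1/|I|$, which lies in the relative interior of the simplex, yielding normal ascent. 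The curve property is essentially immediate because $\MI$ is affine, so $N_{e(\vxinter,t)}\MI$ is independent of $t$; decomposing $e(\vxinter,t)-\vxinter = (\projM[\MI](\vxinter)-\vxinter) + t\,\tfrac{\dd}{\dd t}e(\vxinter,0) + O(t^2)$ and projecting, the tangent linear term is annihilated and one is left with the fixed normal displacement of norm $\distM[\MI](\vxinter)$ plus a uniform $O(t^2)$ remainder.

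For $\funns=\lammax$ at $\crit[\vxinter]\in\Mr$, I would fix a spectral decomposition $\crit[\vxinter]=E\Lambda E^\top$ and write $E_1=E_{:,1:r}$. The subdifferential $\{E_1 Z E_1^\top : Z\in\Sym[r],\ Z\succeq 0,\ \trace Z=1\}$ has affine span $\{E_1 Z E_1^\top : \trace Z = 1\}$, which by the sharpness axiom of partial smoothness is a translate of $N_{\crit[\vxinter]}\Mr = \{E_1 Z E_1^\top : Z\in\Sym[r],\ \trace Z = 0\}$. Projection onto $N_{\crit[\vxinter]}\Mr$ is therefore the shift $Z\mapsto Z-I_r/r$, and zero is attained at $Z=I_r/r$, a positive definite matrix of trace one lying in the relative interior of the spectraplex. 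This gives normal ascent.

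The curve property for $\lammax$ is the delicate piece, since $\Mr$ is genuinely curved and its normal spaces vary along it. Writing $P_t = \proj_{N_{e(\vxinter,t)}\Mr}$ and $v_t = e(\vxinter,t)-\vxinter$, I would Taylor-expand $h(t) = \|P_t v_t\|^2$ around $t=0$. One has $v_0\in N_{\projM[\Mr](\vxinter)}\Mr$ and $\dot{v}_0=\tfrac{\dd}{\dd t}e(\vxinter,0)\in T_{\projM[\Mr](\vxinter)}\Mr$, so $P_0 v_0 = v_0$ and $P_0\dot{v}_0 = 0$; meanwhile the generic projector identity $P_s\dot{P}_s P_s = 0$, obtained by differentiating $P_s^2 = P_s$ and multiplying by $P_s$ on the right, forces $\dot{P}_0 v_0\in T_{\projM[\Mr](\vxinter)}\Mr$, hence orthogonal to $v_0$. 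Together these give $h'(0)=0$. A careful second-order expansion then produces $h(t)\le \distM[\Mr](\vxinter)^2 + C\,\distM[\Mr](\vxinter)\,t^2 + C'\,t^4$, and the elementary inequality $\sqrt{a^2+ab+b^2}\le a+b$ yields $\|P_t v_t\|\le \distM[\Mr](\vxinter)+\constcurve\,t^2$. The main obstacle is the uniform bookkeeping in $\vxinter$: controlling derivatives of the smooth spectral decomposition and of the curve $e$ on a neighborhood of $\crit[\vxinter]$, and organizing the cross-term $\distM[\Mr](\vxinter)\cdot t^2$ so that after taking the square root the bound remains $\distM[\Mr](\vxinter) + O(t^2)$ rather than degrading to a $\sqrt{t}$ loss.
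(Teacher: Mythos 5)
Your proof is correct in substance but takes a genuinely different route from the paper's for the hardest piece, the curve property of $\lammax$. For the normal ascent property you verify $0 \in \ri \proj_{\normal{\crit[\vxinter]}{\Minter}}\partial\funns(\crit[\vxinter])$ directly (centroid of the centered simplex for $\max$; $Z = I_r/r$ in the relative interior of the spectraplex for $\lammax$), whereas the paper instead exhibits the positive directional derivative along normal directions (for $\lammax$ it constructs the subgradient $U(I/r+\alpha Z)U^{\top}$); for convex $\funns$ these are equivalent, and your version is arguably closer to \Cref{prop:normalascent} as literally stated. For the curve property of $\lammax$, the paper reduces to \cref{lemma:simplecurveproof} by proving the \emph{exact vanishing} of $\D\bigl(t\mapsto\proj_{\normal{\cm(\vxinter,t)}{\Mr}}(\projM[\Mr](\vxinter)-\vxinter)\bigr)$ at $t=0$, via explicit eigenvector calculus (the smooth basis $U(t)$ with $\D U(0)^{\top}U(0)=0$, the diagonality of $Z$, and the formula for $\D U_i(0)$); this lets it Taylor-expand the vector $\theta(t)=\proj_{\normal{\cm(\vxinter,t)}{\Mr}}(\cm(\vxinter,t)-\vxinter)$ itself, so the bound $\|\theta(t)\|\le\|\theta(0)\|+\constcurve t^2$ falls out with no square roots. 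You instead use only the generic projector identity $P\dot P P=0$, which gives the weaker fact $\dot\theta(0)\perp\theta(0)$, and then work with $h=\|\theta\|^2$. This is more general (no spectral calculus needed) but it shifts the burden onto the second-order bookkeeping, and that is where your sketch is thinnest: the Lagrange remainder $\tfrac{t^2}{2}h''(\xi)$ contains the cross term $\langle\ddot\theta(\xi),\theta(\xi)\rangle$, and if you only bound $\|\theta(\xi)\|\le\|v_\xi\|\le\distM[\Mr](\vxinter)+O(\xi)$ you pick up an $O(t^3)$ term in $h(t)$, which after the square root degrades to $O(t^{3/2})$ when $\distM[\Mr](\vxinter)\to 0$ and destroys the claimed bound. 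To reach the form $h(t)\le a^2+Cat^2+C't^4$ you actually need the refined estimates $\|\theta(\xi)\|\le C(a+\xi^2)$ and $\|\dot\theta(\xi)\|\le C(a+\xi)$ with $a=\distM[\Mr](\vxinter)$, which follow from $P_0\dot v_0=0$ together with $\|\dot P_0 v_0\|\le K\|v_0\|=Ka$ — both available to you, but neither stated. With those two bounds inserted, your $\sqrt{a^2+ab+b^2}\le a+b$ step closes the argument and the constants are uniform on a neighborhood of $\crit[\vxinter]$; without them the expansion you assert is not justified. In short: your approach buys generality and avoids the eigenvector computation, at the cost of a more delicate quantitative argument; the paper's buys a one-line Taylor bound at the cost of a problem-specific derivative computation.
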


Finally, the structure provided by $\prox_{\step \funns}$ lies in the intermediate space $\interSpace$, while the optimization variable lives in $\inputSpace$.
In order to transfer the structure information to the input space, we will also require the smooth map $\mapping:\inputSpace\to\interSpace$ to be transversal to $\Minter \subset\interSpace$ at some point $\crit\in\inputSpace$, which holds when $\Minter$ is a manifold around $\mapping(\crit)$ and the following (equivalent) conditions hold:
\begin{align}%
  \label{eq:transversality}
  \normal{\mapping(\crit)}{\Minter} \cap \ker(\D \mapping(\crit)^{*}) = \{0\} \quad \text{ or } \quad \tangent{\mapping(\crit[\vx])}{\Minter} + \Im\; \D \mapping(\crit[\vx]) = \interSpace.
\end{align}
In that case, the set $\mapping^{-1}(\Minter)$ is a submanifold of $\inputSpace$~\cite[Th. 6.30]{leeIntroductionSmoothManifolds2003}, whose normal space has the same dimension as the one of $\Minter$. Furthermore, we have~\cite[Ex.~6-10]{leeIntroductionSmoothManifolds2003}
\begin{align}%
  \label{eq:normalspaceintertoinput}
  \normal{\vx}{\mapping^{-1}(\Minter)} = \D \mapping(\vx)^{*} \normal{\mapping(\vx)}{\Minter}  ~ \text{ and } ~ \tangent{\vx}{\mapping^{-1}(\Minter)} = \D \mapping(\vx)^{-1} \tangent{\mapping(\vx)}{\Minter}.
\end{align}

\section{Collecting structure with the proximity operator}\label{sec:identif-prox}

We show in this section how to exactly detect the optimal structure manifold of the composite function $\funcomp = \funns \circ \mapping$ around a point $\crit[\vx]$ using the proximity operator of $\funns$. %

In our nonconvex and nonsmooth setting, we seek only structured points which satisfy certain assumptions summarized in our definition of a \emph{qualified point}.

\begin{definition}[Qualified points]\label{def:qualifiedpoints}
  A point $\crit[\vx]\in\inputSpace$ is \emph{qualified} relative to a decomposition $(\funns, \mapping)$ of $\funcomp$ and manifold $\Minter$ if
  \begin{enumerate}
    \item $\funns$ is prox-bounded and prox-regular at $\mapping(\crit[\vx])$;
    \item $\funns$ is partly smooth at $\mapping(\crit)$ relative to $\Minter$;
    \item $\mapping$ is transversal to $\Minter$ at $\crit$;
    \item $\funns$ satisfies~\Cref{prop:normalascent,prop:curve} at point $\mapping(\crit[\vx])$.
  \end{enumerate}
\end{definition}

Three of these assumptions constrain only the nonsmooth function $\funns$ and are easily verifiable in practice.
Only the transversality condition limits the range of acceptable smooth mappings; see \eg~\cite[Sec.\;4]{lewis2002active}.
For such \emph{qualified} points, we get two useful properties: first, $\funcomp$ is partly smooth at $\crit[\vx]$ relative to \GBedit{the manifold $\M$, locally defined as $\M \defeq c^{-1}(\Minter) \ni \crit[\vx]$ by the chain rule of~\cite[Th. 4.2]{lewis2002active}}, and second, the operator $\prox_{\step \funns}$ is single-valued, locally Lipschitz, and defined by its optimality condition near $\mapping(\crit[\vx])$.

\subsection{Main result\GBedit{: $\prox_{\step \funns}\circ \mapping$ as a structure detector}}

We show in the following theorem that if $\vx$ is near a qualified point of $\funcomp$ with structure $\M$, then $\prox_{\step \funns}(\mapping(\vx))$ will output a point on $\Minter = \mapping(\M)$, the structure manifold of $\funns$ corresponding to $\M$ (in the intermediate space).
Our theorem provides precise conditions on $\vx$ and $\step$ that guarantee this structure identification and forms the main theoretical contribution of the paper.
We illustrate this behavior in \GBedit{\Cref{fig:3funs,fig:3funsinter}.}

The position of this result with respect to the literature is discussed right after in~\Cref{rmk:positionlit}, and the proof is given in the following~\Cref{sec:proofidentif}, in a succession of technical lemmas.
We stress that we give guarantees on the \emph{structure} to which the point $\prox_{\step \funns}(\mapping(\vx))$ belongs, rather than on the point itself.

\begin{theorem}\label{th:proxintspace}
  Consider a function $\funcomp = \funns\circ \mapping$ and a point $\crit[\vx]$.
  Assume that $\crit[\vx]$ is \emph{qualified} relative to a manifold $\Minter \subset \interSpace$. Then, there exists a neighborhood $\N_{\crit}$ of $\crit$ and a constant $\stepUpBnd$ such that, for all $\vx \in \N_{\crit}$,
  \begin{align}
    \prox_{\step \funns}(\mapping(\vx)) \in \Minter \text{ for all } \step \in [\stepLowBnd(\distM(\vx)), \stepUpBnd],
  \end{align}
  where $\distM(\vx)$ denotes the distance from $\vx$ to the manifold $\M$ and $\stepLowBnd$ is defined as
  \begin{align}
    \stepLowBnd(t) = \frac{\cri}{2\constcurve} \left( 1- \sqrt{1 - \frac{4\constcurve\cmap t}{\cri^{2}}} \right) = \frac{\cmap}{\cri}t + \frac{\constcurve\cmap^{2}}{\cri^{3}}t^{2} + o(t^{2}),
  \end{align}
  with $\cri$, $\cmap$, and $\constcurve$ (of~\Cref{prop:curve}) positive constants.

  In particular, there exists $L>0$, $\epsilon>0$ such that
  \begin{align}
    \|\vx - \opt\| \le \epsilon \text{ and } L\|\vx - \opt\| \le \step \le \stepUpBnd \Longrightarrow \prox_{\step \funns}(\mapping(\vx)) \in \Minter.
  \end{align}
\end{theorem}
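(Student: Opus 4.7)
The plan is to apply the characterization from Proposition~\ref{prop:wellbehavedprox}: since $\funns$ is prox-regular and prox-bounded at $\mapping(\crit)$, for $\step$ below the associated thresholds the prox is single-valued and $p = \prox_{\step \funns}(\mapping(\vx))$ iff $\mapping(\vx) - p \in \step\, \partial \funns(p)$. As a candidate, I would take the solution of the \emph{restricted} proximal problem
\begin{align*}
p^{\star}(\step) \in \argmin_{p \in \Minter} \Big\{ \funns(p) + \tfrac{1}{2\step} \|p - \mapping(\vx)\|^{2} \Big\},
\end{align*}
which is smooth in $(\vx, \step)$ for small $\step$ because $\funns$ is smooth on $\Minter$. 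Its Riemannian first-order condition is exactly the tangential part of the unrestricted optimality, so only the normal inclusion $\proj_{\normal{p^{\star}}{\Minter}}(\mapping(\vx) - p^{\star}) \in \step\, \proj_{\normal{p^{\star}}{\Minter}}(\partial \funns(p^{\star}))$ remains to be verified.

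The two auxiliary properties control this normal inclusion. First, implicit differentiation of the Riemannian optimality condition at $\step = 0$ yields $p^{\star}(0) = \projM[\Minter](\mapping(\vx))$ and $\tfrac{d p^{\star}}{d \step}(0) = -\grad \funns(p^{\star}(0))$, so $\step \mapsto p^{\star}(\step)$ qualifies as a smooth application $\cm$ in Property~\ref{prop:curve}; invoking it gives
\begin{align*}
\|\proj_{\normal{p^{\star}(\step)}{\Minter}}(\mapping(\vx) - p^{\star}(\step))\| \le \distM[\Minter](\mapping(\vx)) + \constcurve \step^{2} \le \cmap \distM(\vx) + \constcurve \step^{2},
\end{align*}
where the second inequality uses the transversality of $\mapping$ to $\Minter$ together with $\M = \mapping^{-1}(\Minter)$ to produce a local constant $\cmap$. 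Second, Normal Ascent supplies $\cri > 0$ such that the $\cri$-ball in $\normal{\mapping(\crit)}{\Minter}$ lies inside the projection of $\partial \funns(\mapping(\crit))$ onto that normal space, and sub-continuity of $\partial \funns$ (from partial smoothness) propagates a slightly smaller inclusion to all nearby points on $\Minter$.

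Combining the two, the normal inclusion reduces to the scalar quadratic inequality
\begin{align*}
\constcurve \step^{2} - \cri \step + \cmap \distM(\vx) \le 0,
\end{align*}
whose solution set is precisely $[\stepLowBnd(\distM(\vx)),\, \cri/\constcurve]$ with $\stepLowBnd$ the smaller root, matching the formula in the statement. Taking $\stepUpBnd$ as the minimum of $\cri/\constcurve$ and the prox-regularity / prox-boundedness thresholds delivers the first claim; the second follows from the expansion $\stepLowBnd(t) = (\cmap/\cri)\, t + O(t^{2})$ together with $\distM(\vx) \le \|\vx - \opt\|$.

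The main obstacle is uniformity over a single neighborhood $\N_{\crit}$: the constants $\cri$, $\cmap$, $\constcurve$, the smoothness of $p^{\star}$ in $(\vx, \step)$, and the sub-continuity transfer of the $\cri$-ball all arise from local arguments and must be established simultaneously. The key non-trivial verification is that the leading velocity of $p^{\star}(\step)$ genuinely matches $-\grad \funns(\projM[\Minter](\mapping(\vx)))$ despite the $\step$-dependent variation of the tangent-space projection in the Riemannian optimality condition; this requires an implicit-function-theorem computation that isolates the leading term, absorbing the curvature contributions of $\Minter$ into the constant $\constcurve$ provided by the Curve Property.
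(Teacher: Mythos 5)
Your proposal follows essentially the same route as the paper: a manifold-constrained candidate solving the tangential (Riemannian) optimality condition, built by the implicit function theorem with $p^{\star}(0)=\projM[\Minter](\mapping(\vx))$ and velocity $-\grad \funns(p^{\star}(0))$; the normal inclusion controlled by combining \Cref{prop:curve} with the radius $\cri$ supplied by \Cref{prop:normalascent} and sub-continuity, yielding the quadratic inequality $\constcurve\step^{2}-\cri\step+\cmap\distM(\vx)\le 0$; identification with $\prox_{\step\funns}$ via \Cref{prop:wellbehavedprox}; and the transversality bound $\distM[\Minter](\mapping(\vx))\le\cmap\distM(\vx)$. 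The only slip is cosmetic: the admissible interval's upper endpoint is the larger root $\frac{\cri+\sqrt{\cri^{2}-4\constcurve\cmap t}}{2\constcurve}$, which is strictly below $\cri/\constcurve$ for $t>0$, so $\stepUpBnd$ should be any uniform constant below that root over the neighborhood rather than $\cri/\constcurve$ itself.
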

Note that~\Cref{prop:curve} is only used %
to compute explicitly an interval of\;$\step$ guaranteed to provide the correct structure; the existence of that interval holds independently.

\begin{remark}[Relation with existing results]\label{rmk:positionlit}
  The difference between~\Cref{th:proxintspace} and existing results lies in two aspects.
  First, the identification properties of the proximal operator~\cite[Th.\;28]{daniilidis2006geometrical}, the proximal-gradient operator~\cite[Th.\;3.1]{bareilles2020newton}, or even approximate prox-gradient operators \cite{leeAcceleratingInexactSuccessive2021} give structure information directly in the input space (even in abstract algorithmic frameworks~\cite[Th.\;4]{hare2004identifying} \GBedit{or~\cite[Th. 4.10]{lewisPartialSmoothnessTilt2013}}).
  In the composite case, the proximity operator reveals structure in the intermediate space only, and extra work is required to bring it back to the input space.

  Second, most existing results investigate identification properties near minimizers, and not just arbitrary points \GBreplace{(a notable exception is \cite{bareilles2020newton} in a different context)}{(two notable exceptions give results near arbitrary structured points:~\cite{lewisPartialSmoothnessTilt2013} for an abstract algorithmic framework, and \cite{bareilles2020newton} for the proximal gradient)}.
  Here, we evaluate $\prox_{\step \funns}$ near $\mapping(\crit[\vx])$, a point without any specific properties (even if $\crit[\vx]$ is a local minimizer).
  This is why we need \Cref{prop:normalascent} to guarantee identification in the intermediate space, and bring the structure information to the input space.
\end{remark}

\begin{figure}[p]
  \centering
  \includetikzfig[0.48]{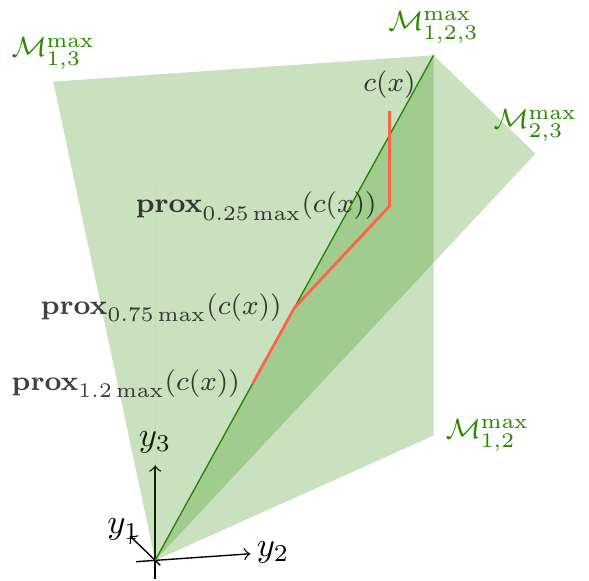}
  \caption{\GBedit{
      Illustration of the main result in the intermediate space, on the function of \cref{fig:3funs}.
      The structure manifolds of $\max : \bbR^{3} \to \bbR$ are displayed as the three half-planes and the line in green.
      The red line illustrates the curve $\gamma \mapsto \prox_{\step \max}(c(x))$.
      When $\step< 0.25$, the curve does not lie on any structure manifold.
      For $\step \in [0.25, 0.75)$, the curve lies on the optimal manifold $\M_{{2, 3}}^{\max}$.
      For $\step \ge 0.75$, the curve lies on $\M_{{1, 2, 3}}^{\max}$.%
      \label{fig:3funsinter}
  }
}
\end{figure}

\begin{figure}[p]
  \centering
  \begin{subfigure}[t]{0.46\textwidth}
    \centering
    \includetikzfig{inputspace3f2act_detection_1}
  \end{subfigure}
  \hfill
  \begin{subfigure}[t]{0.46\textwidth}
    \centering
    \includetikzfig{inputspace3f2act_detection_2}
  \end{subfigure}
  \vspace*{-2ex}
  \begin{subfigure}[a]{0.46\textwidth}
    \centering
    \includetikzfig{inputspace3f2act_detection_3}
  \end{subfigure}
  \hfill
  \begin{subfigure}[a]{0.46\textwidth}
    \centering
    \includetikzfig{3funslegend}
  \end{subfigure}
  \caption{
    Illustration of the main result on a maximum of three quadratic functions, with $\crit \in \M^{\max}_{\{1, 2\}}$ and a point \GBreplace{$\tilde x$}{$\vx$} near $\crit$.
    The three figures show the areas where $\prox_{\step \funns}\circ \mapping$ detects manifolds for three stepsizes: $\step = 0.4$ (upper left), $\step = 1$ (upper right) and $\step = 2.3$ (lower left).
    We see on the upper left fig.\;that $\prox_{\step \funns}\circ \mapping$ detects no structure from \GBreplace{$\tilde x$}{$\vx$} because $\step$ is too small, and in \GBreplace{constrast}{contrast}, on the lower fig., that it wrongly detects too much structure ($\M_{\{1,2,3\}}^{\max}$) because $\step$ is too large.
    On the upper right fig., the optimal manifold is detected with $\step$ chosen in the right interval.%
    \label{fig:3funs}
  }
  \vspace*{-2ex}
\end{figure}

\GBedit{
  \begin{remark}[About prox-linear methods]
    Prox-linear methods are known to identify structure on composite problems~\cite{lewis2016proximal}.
    Specifically,~\cite[Th. 4.11]{lewis2016proximal} establishes that, after some finite time, an intermediate quantity \GBtwoedit{defined from the prox-linear subproblem \emph{exact} solution} belongs to the \GBtwoedit{optimal} structure manifold \GBtwoedit{$\opt[\Minter]$}.
    \GBtworeplace{It is then mentioned that}{In principle,} this information could be used to take efficient second-order steps to minimize $\funcomp$ along the identified manifold \GBtwoedit{$\opt[\M]$}.
    \GBtworeplace{Whether this can be done generically is unclear to us: checking that this quantity, obtained from the subproblem solution, belongs to a structure manifold is delicate.
    Though it is reasonable if the subproblem is solved with a suitable active-set method, it becomes delicate if only an approximation of the subproblem solution is available, using \eg interior point methods.
    In that case, the quantity will be somewhat close to the structure manifold, and one would have to resort to $\epsilon$-based tests.}{
    However, for generic composite problems, it may be difficult, first, to obtain an \emph{exact} solution of the prox-linear subproblems, and, second, to check if the ensuing quantity belongs to $\opt[\Minter]$.
    In contrast, the approach presented here only needs to compute an exact solution of the proximal operator of the simple nonsmooth function $\funns$.
  }
  \end{remark}

  \begin{remark}[\Cref{th:proxintspace} provides a structure identification tool]
    In contrast with the identification of prox-linear methods, \cref{th:proxintspace} provides a simple result for the detection of structure manifolds near any point $\vx\in\inputSpace$.
    We also underline that the bounds on the range of $\step$ that provide correct identification are surprisingly simple: the upper bound is constant and the lower bound is essentially \GBtworeplace{a linear function of to the distance to the manifold}{proportional to the distance to the manifold}.
    These simple and explicit bounds allow us to build a simple algorithm in the forthcoming \Cref{sec:method}.
  \end{remark}
}

\subsection{Proof of \Cref{th:proxintspace}}\label{sec:proofidentif}

\FIedit{
  The main difficulty of the proof is to build a suitable identification result for the nonsmooth function $\funns$.
  \Cref{th:proxintspace} (identification for $\funns\circ \mapping$) would then follow by taking into account the action of the smooth map $\mapping$.

  To derive an identification result on $\funns$, we have to give conditions on $\vxinter$ and $\step$ so that $p = \prox_{\step \funns}(\vxinter)$ lies on the considered manifold $\Minter$.
  Since $\funns$ is prox-regular and prox-bounded at point $\mapping(\crit[\vx])$, \cref{prop:wellbehavedprox} allows us to characterize this relation by its first-order optimality condition:
  \begin{align}
    \vxinter \in p + \step \partial \funns(p).
  \end{align}

  Whenever $p\in\Minter$ (which is what we want to show), this inclusion decomposes along $\tangent{p}{\Minter}$ and $\normal{p}{\Minter}$ as:
  \begin{align}
    \projT{p}{\Minter}(\vxinter - p) &= \step \grad \funns(p) \label{eq:expltanpart}\\
    \proj_{\normal{p}{\Minter}}(\vxinter - p) &\in \step \proj_{\normal{p}{\Minter}}  \partial \funns(p). \label{eq:explnormpart}
  \end{align}
  Thus we will show that for suitable $(\vxinter,\step)$, there is a unique $p$ that satisfies these two equations. We do so by considering the smooth  tangent component \cref{eq:expltanpart} first and then the nonsmooth  normal component \cref{eq:explnormpart} as follows:
  \begin{itemize}
    \item  We first show in \Cref{lem:smoothcurve} that for $\vxinter$ near $\crit[\vxinter]$ and $\step$ small, there exists a unique point $p=\cm(\vxinter, \step)$ on $\Minter$ that satisfies \cref{eq:expltanpart}, which depends smoothly on $\gamma$ and $\vxinter$. 
    This result is obtained by applying the implicit function theorem.
    \item Then, we prove in \Cref{lem:fullcurve} that $\cm(\vxinter, \step)$ also satisfies the second inclusion \cref{eq:explnormpart} if $\step$ belongs to the interval $[\stepLowBnd^{\funns}(\distM[\Minter](\vxinter)), \stepUpBnd^{\funns}]$. This result is a consequence of the application of some %
    variational analysis tools.
  \end{itemize}
  Putting these two results together, we obtain the existence and uniqueness of a point $p=\cm(\vxinter, \step) \in \Minter$ verifying both  \cref{eq:expltanpart} and \cref{eq:explnormpart} for all $\vxinter$ near $\crit[\vxinter]$ and $\step \in [\stepLowBnd^{\funns}(\distM[\Minter](\vxinter)), \stepUpBnd^{\funns}]$. By the first-order optimality condition presented above, this point is necessarily $\prox_{\step \funns}(\vxinter)$.

  Finally, this identification result in the intermediate space on $\funns$ is transferred back to the input space using transversality.
}

\subsubsection{\GBedit{Part 1: tangent optimality}}

We first show that, for $\vxinter$ near $\crit[\vxinter]$ and $\step$ small, there is a unique point $p$ \emph{on the manifold}  $\Minter$ that satisfies the tangent component of this optimality condition:
\begin{align}\label{eq:proxOCtangent}
  \tag{\ref{eq:expltanpart}}
  \projT{p}{\Minter}(\vxinter - p) = \step \grad \funns(p),
\end{align}
where $\grad \funns(p) \defeq \proj_{\tangent{p}{\Minter}} \partial (\funns(p))$ is unique by the sharpness property of partial smoothness, and matches the Riemannian gradient of $\funns$ on $\Minter$ (see~\cite[Sec. 7.7]{boumal2022intromanifolds}).
Such points $p$ are given by a smooth manifold-valued application $\cm(\vxinter, \step)$, the existence of which is guaranteed by the following lemma.

\begin{lemma}\label{lem:smoothcurve}
  Consider a function $\funns: \interSpace\to {\RR}\cup\{+\infty\}$, a point $\crit[\vxinter]\in\interSpace$, and a manifold $\Minter$ with $\funns$ partly smooth at $\crit[\vxinter]$ relative to $\Minter$.
  Then, there exists a smooth curve $\cm:\N_{\crit[\vxinter]}\times \N_{0} \to \M$ defined on a neighborhood of $(\crit[\vxinter], 0)$ in $\interSpace\times\bbR_{+}$ such that
  \begin{itemize}
    \item for all $y\in\N_{\crit[\vxinter]}$, $\cm(y, 0) = \projM[\Minter](y)$ and $\frac{\dd}{\dd \step}\cm(y, \step)\vert_{\step=0} = -\grad \funns(\projM[\Minter](y))$;
    \item for all $y\in\N_{\crit[\vxinter]}$, $\step\in\N_{0}$,~Eq.~\eqref{eq:proxOCtangent} is satisfied for $p=\cm(\vxinter, \step)$.
  \end{itemize}
\end{lemma}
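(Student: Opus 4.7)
My plan is to recast the tangent optimality \eqref{eq:proxOCtangent} together with the manifold constraint $p\in\Minter$ as a square implicit system and invoke the implicit function theorem. Let $\maneq:\Rnalt\to\bbR^{\Mcodim}$ be a $\C^2$ manifold-defining map for $\Minter$ near $\crit[\vxinter]$, so that locally $p\in\Minter\iff\maneq(p)=0$ and $\normal{p}{\Minter}=\Im\,\D\maneq(p)^*$. Introducing a Lagrange multiplier $\Lagmult\in\bbR^{\Mcodim}$, condition \eqref{eq:proxOCtangent} rewrites as $\vxinter-p-\step\,\grad\funns(p)=\D\maneq(p)^{*}\Lagmult$ with $\maneq(p)=0$. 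Partial smoothness gives $\funns|_{\Minter}\in\C^{2}$, so $\grad\funns$ is a $\C^{1}$ vector field on $\Minter$ near $\crit[\vxinter]$; I pick any $\C^{1}$ ambient extension $\widetilde{G}$ on a neighborhood of $\crit[\vxinter]$ in $\Rnalt$ (routinely available via a tubular neighborhood of $\Minter$).

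I would then apply the implicit function theorem to
\begin{align*}
  F(\vxinter,\step,p,\Lagmult) \defeq \bigl(\vxinter-p-\step\,\widetilde{G}(p)-\D\maneq(p)^{*}\Lagmult,\ \maneq(p)\bigr),
\end{align*}
which vanishes at the base point $(\crit[\vxinter],0,\crit[\vxinter],0)$ since $\crit[\vxinter]\in\Minter$. Its partial Jacobian in $(p,\Lagmult)$ at that base point equals
\begin{align*}
  J = \begin{pmatrix} -\Id & -\D\maneq(\crit[\vxinter])^{*} \\ \D\maneq(\crit[\vxinter]) & 0 \end{pmatrix}.
\end{align*}
Surjectivity of $\D\maneq(\crit[\vxinter])$ makes $\D\maneq(\crit[\vxinter])\D\maneq(\crit[\vxinter])^{*}$ invertible, and a direct block elimination then gives invertibility of $J$. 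The implicit function theorem yields $\C^{1}$ maps $p(\vxinter,\step)$ and $\Lagmult(\vxinter,\step)$ on a neighborhood $\N_{\crit[\vxinter]}\times\N_{0}$ satisfying $F\equiv 0$. Setting $\cm(\vxinter,\step)\defeq p(\vxinter,\step)$ gives a smooth curve taking values in $\Minter$ (since $\maneq(\cm)\equiv 0$) and satisfying \eqref{eq:proxOCtangent} by construction.

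The two initial conditions then follow from inspection of $F=0$. At $\step=0$, the first block reduces to $\vxinter-p\in\Im\,\D\maneq(p)^{*}=\normal{p}{\Minter}$ together with $p\in\Minter$, which is exactly the first-order condition for $p=\projM[\Minter](\vxinter)$; local uniqueness of the nearest point on a $\C^{2}$ submanifold gives $\cm(\vxinter,0)=\projM[\Minter](\vxinter)$. For the initial velocity, implicitly differentiating $F\equiv 0$ in $\step$ at the base point $(\crit[\vxinter],0)$---where $\Lagmult=0$ eliminates the otherwise-present $D_{p}[\D\maneq(p)^{*}\Lagmult]$ contribution---yields $\D\maneq(\crit[\vxinter])\partial_\step\cm=0$ together with $\partial_\step\cm+\D\maneq(\crit[\vxinter])^{*}\partial_\step\Lagmult=-\widetilde{G}(\crit[\vxinter])$. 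Projecting the second relation onto $\tangent{\crit[\vxinter]}{\Minter}=\ker\D\maneq(\crit[\vxinter])$ annihilates the multiplier term and uses $\widetilde{G}(\crit[\vxinter])=\grad\funns(\crit[\vxinter])\in\tangent{\crit[\vxinter]}{\Minter}$ to obtain $\partial_\step\cm(\crit[\vxinter],0)=-\grad\funns(\crit[\vxinter])$; the analogous computation at nearby $\vxinter$ gives the stated formula. The only mildly delicate ingredient is the $\C^{1}$ ambient extension of $\grad\funns$, which is standard near the $\C^{2}$ submanifold $\Minter$; everything else is a mechanical application of the implicit function theorem made clean by the block form of $J$.
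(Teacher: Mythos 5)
Your proof is correct and rests on the same core idea as the paper's: apply the implicit function theorem to the tangent stationarity condition \eqref{eq:proxOCtangent}, identify the $\step=0$ slice with $\projM[\Minter]$ through its first-order optimality condition plus local uniqueness of the projection, and obtain the initial velocity by implicit differentiation in $\step$. The difference is one of formulation. The paper works intrinsically: it defines $\Phi(\vxinter,\step,p)=\step\grad\funns(p)-\projT{p}{\Minter}(\vxinter-p)$ as a map into the tangent bundle and checks $\D_{p}\Phi(\crit[\vxinter],0,\crit[\vxinter])=I$. You instead lift to a square ambient system in $(p,\Lagmult)$ via a manifold-defining map $\maneq$ and an ambient $\C^1$ extension of $\grad\funns$; this buys a cleaner IFT application (a plain Euclidean block matrix whose invertibility follows from surjectivity of $\D\maneq(\crit[\vxinter])$, with no need to differentiate a projector-valued map or to invoke a manifold-valued implicit function), at the cost of the auxiliary extension $\widetilde G$ and the multiplier. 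One caveat, which your argument shares with the paper's: the identity $\partial_{\step}\cm(\vxinter,0)=-\grad\funns(\projM[\Minter](\vxinter))$ comes out exactly only at $\vxinter=\crit[\vxinter]$, where $\Lagmult=0$. For nearby $\vxinter$ off $\Minter$ the multiplier $\Lagmult(\vxinter,0)$ is nonzero, so the cross term $\D_{p}\bigl[\D\maneq(p)^{*}\bigr]\Lagmult(\vxinter,0)\bigl[\partial_{\step}p\bigr]$ survives and your ``analogous computation'' delivers the stated velocity only up to an $\bigoh(\distM[\Minter](\vxinter))$ curvature correction --- precisely the same looseness as in the paper's step $\D_{\step}\cm(y,0)=-[\D_{p}\Phi(y,0,\projM[\Minter](y))]^{-1}\grad\funns(\projM[\Minter](y))=-\grad\funns(\projM[\Minter](y))$, since that Jacobian equals $I$ only when $y$ lies on the manifold. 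So this is not a gap relative to the paper, but it is worth being aware of if the exact first-order expansion of $\cm$ is later used uniformly over a neighborhood.
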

\begin{proof}
  We define the mapping $\Phi:\interSpace \times \bbR \times \Minter\to\cup_{\vx\in\Minter}\tangent{\vx}{\Minter}$ as
  \begin{align}
    \Phi(\vxinter, \step, p) = \step \grad \funns(p) - \projT{p}{\Minter}(\vxinter - p)
  \end{align}
  and consider the equation $\Phi(\vxinter, \step, p) = 0$ near the point $(\crit[\vxinter], 0, \crit[\vxinter])$.
  Using the smoothness of $\funns$ on $\Minter$ given by partial smoothness, we have that this mapping is continuously differentiable on a neighborhood of $(\crit[\vxinter], 0, \crit[\vxinter])$.
  We see that its differential with respect to $p$ is $\D_{p} \Phi(\crit[\vxinter], 0, \crit[\vxinter]) = I$.
  Indeed, for $\eta \in \tangent{p}{\Minter}$,
  \begin{align}
    \D_{p} \Phi(\vxinter, \step, p)[\eta] = \step \Hess \funns(p)[\eta] + \eta - \D_{p'}\left(p'\mapsto\projT{p'}{\Minter}(\vxinter - p)\right)(p)[\eta].
  \end{align}
  At point $(\crit[\vxinter], 0, \crit[\vxinter])$, the first term vanishes, and the third term  writes
  \begin{align}
    \D_{p'}\left(p'\mapsto\projT{p'}{\Minter}(0)\right)(\crit[\vxinter])[\eta]
  \end{align}
  and vanishes as well as the differential of the null function $p'\mapsto\projT{p'}{\Minter}(0)$.
  Thus $\D_{p} \Phi(\crit[\vxinter], 0, \crit[\vxinter]) = I$ is invertible.
  The implicit functions theorem thus grants the existence of neighborhoods $\N_{\crit[\vxinter]}^{1}$, $\N_{0}^{2}$, $\N_{\crit[\vxinter]}^{3}$ of $\crit[\vxinter]$, $0$, $\crit[\vxinter]$ in $\interSpace$, $\RR$, $\Minter$ and a continuously differentiable function $\GBreplace{\c}{\cm}:\N_{\crit[\vxinter]}^{1} \times \N_{0}^{2}\to\N_{\crit[\vxinter]}^{3}$ such that, for any $(y, \step)\in\N_{\crit[\vxinter]}^{1} \times \N_{0}^{2}$,~\Cref{eq:proxOCtangent} is satisfied with $p=\cm(\vxinter, \step)$.
  For $y\in\N_{\crit[\vxinter]}^{1}$, $\cm(y, 0)$ satisfies $y - \cm(y, 0) \in \normal{\cm(y, 0)}{\Minter}$, which is the first-order optimality condition of $\cm(y, 0) = \projM[\Minter](y)$.
  Possibly reducing $\N_{\crit[\vxinter]}^{1}$ so that, for all $y\in\N_{\crit[\vxinter]}$ $\projM[\Minter](y)$ is well-defined and unique, the previous optimality condition is equivalent to $\cm(y, 0) = \projM[\Minter](y)$.
  Besides, differentiating $\Phi(y, \step, \cm(\vxinter, \step)) = 0$ relative to $\step$ at $\step=0$ yields
  \begin{align}
    \D_{\step} \cm(y, 0) &= - [\D_{p} \Phi(y, 0, \projM[\Minter](y))]^{-1} \D_{\step} \Phi(y, 0, \projM[\Minter](y)) \\
                         &= -\grad \funns(\projM[\Minter](y)),
  \end{align}
  which concludes the proof.
\end{proof}

\subsubsection{\GBedit{Part 2: normal optimality}}

The previous lemma shows that for every $(y,\step)$ one can find a point $\cm(\vxinter, \step)$ \emph{on the manifold} $\Minter$ that solves the tangent part of the optimality condition~\eqref{eq:proxOCtangent}.
The next lemma determines the values of $y$ and $\step$ for which the whole optimality condition
\begin{align}\label{eq:proxOCri}
  y \in \cm(\vxinter, \step) + \step\ri \partial \funns(\cm(\vxinter, \step))
\end{align}
holds, as illustrated in \Cref{fig:proxneighcurve}.

\begin{figure}[t]
  \centering
  \begin{subfigure}[t]{0.45\textwidth}
    \centering
    \includegraphics[width=0.7\textwidth, page=1]{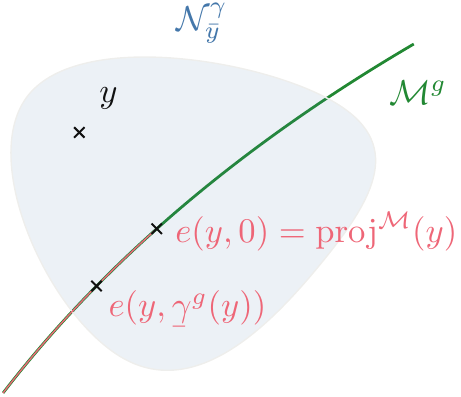}
    \caption{the curve $\step \mapsto \cm(\vxinter, \step)$ on $\M$.\label{fig:proxneighcurve}}
  \end{subfigure}\hfill
  \begin{subfigure}[t]{0.45\textwidth}
    \centering
    \includegraphics[width=0.7\textwidth, page=2]{ipe_proxstable.pdf}
    \caption{the curve $\step\mapsto \prox_{\step \funns}(y)$ on $\M$\\ for $\step\ge\stepLowBnd(\distM[\Minter](y))$.\label{fig:proxneighprox}}
  \end{subfigure}
  \vspace*{-3ex}
  \caption{Illustration of \Cref{lem:fullcurve} and its consequences.\label{fig:proxneigh}}
\end{figure}

\begin{lemma}\label{lem:fullcurve}
  Consider a function $\funns$, a point $\crit[\vxinter]\in\interSpace$ and a manifold $\Minter$ such that $\funns$ is partly smooth at $\crit[\vxinter]$ relative to $\Minter$ and that $\funns$ satisfies \Cref{prop:normalascent} at $\crit[\vxinter]$.
  Let $\cm$ denote a smooth $\M$-valued application defined on a neighborhood of $(\crit[\vxinter], 0)$ provided by~\cref{lem:smoothcurve}.
  Then, there exists $\const>0$ such that:
  \begin{enumerate}
    \item for all $\step\in[0, \const]$, $\cm(\crit[\vxinter], \step)$ verifies~\eqref{eq:proxOCri} with $y = \crit[\vxinter]$,
    \item for all $\step\in[0, \const]$, there exists a neighborhood $\N_{\crit[\vxinter]}^{\step}$ of $\crit[\vxinter]$ such that, for all $y\in\N_{\crit[\vxinter]}^{\step}$, $\cm(\vxinter, \step)$ verifies~\eqref{eq:proxOCri},
  \end{enumerate}
  Further assume that $\funns$ satisfies~\Cref{prop:curve} at $\crit[\vxinter]$ with constant $\constcurve$, then
  \begin{enumerate}\setcounter{enumi}{2}
    \item there exist $\stepUpBnd^{\funns}>0$ and a neighborhood $\N_{\crit[\vxinter]}$ of $\crit[\vxinter]$ %
     such that for all $\vxinter\in\N_{\crit[\vxinter]}$
    \begin{align}
      \cm(\vxinter, \step) \text{ verifies~\eqref{eq:proxOCri} for all } \step \in [\stepLowBnd^{\funns}(\distM[\Minter](\vxinter)), \stepUpBnd^{\funns}],
    \end{align}
          where $\cri \geq 0$ %
          and
      $\stepLowBnd^{\funns}(t) = \frac{\cri}{2\constcurve} \left( 1- \sqrt{1 - \frac{4\constcurve t}{\cri^{2}}} \right) = \frac{1}{\cri}t + \frac{\constcurve}{\cri^{3}}t^{2} + o(t^{2})$.
  \end{enumerate}
\end{lemma}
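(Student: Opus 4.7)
The plan is to decompose the inclusion~\eqref{eq:proxOCri} along the tangent and normal spaces $\tangent{p}{\Minter}$ and $\normal{p}{\Minter}$ at the candidate point $p = \cm(\vxinter,\step)$. By construction of $\cm$ via~\Cref{lem:smoothcurve}, the tangent component~\eqref{eq:expltanpart} is automatically satisfied, so the only remaining condition to check is the normal inclusion
\begin{align*}
  \projN{\cm(\vxinter,\step)}{\Minter}\bigl(\vxinter - \cm(\vxinter,\step)\bigr) \in \step \,\ri\, \projN{\cm(\vxinter,\step)}{\Minter}\, \partial \funns(\cm(\vxinter,\step)),
\end{align*}
and all three parts of the lemma reduce to verifying this inclusion under different hypotheses on $(\vxinter, \step)$.

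For the first part, I would take $\vxinter = \crit[\vxinter]$ and exploit that $\cm(\crit[\vxinter],\cdot)$ is a smooth curve on $\Minter$ starting at $\crit[\vxinter]$ with velocity $-\grad \funns(\crit[\vxinter])$. Since both $\crit[\vxinter]$ and $\cm(\crit[\vxinter],\step)$ lie on $\Minter$, a second-order Taylor estimate makes the normal projection on the left-hand side of order $O(\step^{2})$, so after dividing by $\step$ it tends to $0$ as $\step \to 0^{+}$. On the right-hand side, \Cref{prop:normalascent} grants $0 \in \ri\projN{\crit[\vxinter]}{\Minter}\partial \funns(\crit[\vxinter])$; this set actually contains a ball of positive radius around $0$, and the sub-continuity of $\partial \funns$ along $\Minter$ (from partial smoothness) propagates the inclusion to nearby base points along $\cm(\crit[\vxinter],\cdot)$. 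The inclusion therefore holds on some interval $[0,\const]$.

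The second part follows by a continuity argument at fixed $\step \in [0,\const]$: the left-hand side depends smoothly on $\vxinter$ through $\cm$, and the right-hand side varies continuously as a subset of the normal space (using sub-continuity of $\partial \funns$ on $\Minter$ together with the smooth variation of $\normal{\cdot}{\Minter}$ along $\Minter$). Since at $\vxinter = \crit[\vxinter]$ the left-hand side already lies in the relative interior of the right-hand side, openness of the relative interior yields a neighborhood $\N_{\crit[\vxinter]}^{\step}$ of $\crit[\vxinter]$ on which the strict inclusion persists.

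For the third part, I would quantify the second part using \Cref{prop:curve}. The key intermediate claim is that, after possibly shrinking the neighborhood, there exists $\cri > 0$ such that the ball of radius $\cri$ in $\normal{\cm(\vxinter,\step)}{\Minter}$ sits inside $\ri\projN{\cm(\vxinter,\step)}{\Minter}\partial \funns(\cm(\vxinter,\step))$ uniformly for $\vxinter$ near $\crit[\vxinter]$ and $\step$ small---this is the quantitative form of normal ascent transported along $\cm$. \Cref{prop:curve} then provides
\begin{align*}
  \bigl\|\projN{\cm(\vxinter,\step)}{\Minter}(\vxinter - \cm(\vxinter,\step))\bigr\| \le \distM[\Minter](\vxinter) + \constcurve\, \step^{2},
\end{align*}
so the normal inclusion is implied by $\distM[\Minter](\vxinter)/\step + \constcurve\, \step \le \cri$, i.e., by $\constcurve\, \step^{2} - \cri\, \step + \distM[\Minter](\vxinter) \le 0$. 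The smaller root of this quadratic is precisely $\stepLowBnd^{\funns}(\distM[\Minter](\vxinter))$ as claimed, while $\stepUpBnd^{\funns}$ can be taken as the minimum of the larger root and the constant $\const$ from the first part. The main obstacle will be establishing the uniform inradius $\cri$: I need to lift the pointwise normal-ascent property at $\crit[\vxinter]$ to a ball inside $\ri\projN{p}{\Minter}\partial \funns(p)$ whose radius does not shrink as $p = \cm(\vxinter,\step)$ sweeps over a neighborhood on $\Minter$, combining the sub-continuity of $\partial \funns$ with the smooth variation of the normal space. Once this uniform inradius is secured, the explicit step-size range drops out of the quadratic inequality above.
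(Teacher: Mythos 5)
Your proposal is correct and follows essentially the same route as the paper: reduce~\eqref{eq:proxOCri} to its normal component (the tangent part being automatic from the construction of $\cm$ in \cref{lem:smoothcurve}), obtain items i)--ii) by continuity of the data along $\Minter$, and derive item iii) from the inequality $\constcurve\step^{2}-\cri\step+\distM[\Minter](\vxinter)\le 0$ furnished by \cref{prop:curve} together with a uniform inradius $\cri$, whose smaller root is exactly $\stepLowBnd^{\funns}$. The one step you flag as the main obstacle --- a positive radius $\cri$ such that the ball $\ball(0,\cri)$ in $\normal{p}{\Minter}$ stays inside $\ri\,\proj_{\normal{p}{\Minter}}\partial\funns(p)$ uniformly for $p$ near $\crit[\vxinter]$ on $\Minter$ --- is closed in the paper just as you suggest, by combining the continuity of the selection $p\mapsto\proj_{\tangent{p}{\Minter}}\partial\funns(p)$ of the affine hull with \cite[Lemma 20]{daniilidis2006geometrical} (the constant $\cri$ being a modulus of identifiability in the sense of \cite{lewis2022identifiability}); the only cosmetic difference is that the paper establishes items i)--ii) by a separation/contradiction argument based on inner-semicontinuity of the set-valued map $\Psi(\vxinter,\step)$, which defers the need for the uniform ball to item iii), whereas your argument uses it from the start.
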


The proof consists in finding the points $\vxinter, \step$ such that $0\in\ri\Psi(\vxinter, \step)$, where the mapping $\Psi:\interSpace \times \bbR \to\cup_{\vx\in\Minter}\normal{\vx}{\Minter}$ is defined as
\begin{align}
  \Psi(\vxinter, \step) = \proj_{\normal{\cm(\vxinter, \step)}{\Minter}}\left(\frac{1}{\step}\left(\cm(\vxinter, \step) - \vxinter\right) + \partial \funns\left(\cm(\vxinter, \step)\right)\right).
\end{align}
Items \emph{i)} and \emph{ii)} are shown by extending the property $0\in\Psi(\crit[\vxinter], 0)$ to a neighborhood of $(\crit[\vxinter], 0)$, using the inner-semicontinuity properties of $\Psi$.
\GBedit{We refer to~\cite[Def.~5.4]{rockafellar2009variational} for an exposition of the notions of continuity of set-valued mappings.}
We then derive explicit bounds on the interval of steps such that $0\in\ri\Psi(\vxinter, \step)$: for a fixed $\vxinter \in \N_{\crit[\vxinter]}$, when $\step$ decreases past some value, say $\stepLow(y)$, the condition $0\in\ri\Psi(\vxinter, \step)$ no longer holds.
Precisely at $\stepLow(y)$, $0$ lies on the (relative) boundary of $\Psi(\vxinter, \stepLow(y))$: denoting $\rbd S \defeq S \setminus \ri S$ the relative boundary of set $S$,
  \begin{align}
    0 \in \rbd\proj_{\normal{\cm(\vxinter, \stepLow(\vxinter))}{\Minter}}\left(\frac{1}{\stepLow(\vxinter)}\left(\cm(\vxinter, \stepLow(\vxinter)) - \vxinter\right) + \partial \funns\left(\cm(\vxinter, \stepLow(\vxinter))\right)\right).
  \end{align}
  Denoting $\partial^{N} \funns(p) \defeq \proj_{\normal{p}{\Minter}}(\partial \funns(p))$ the projection of the subdifferential on the normal space of its structure manifold and taking norms yields:
  \begin{align}
    \| \proj_{\normal{\cm(y, \stepLow(y))}{\Minter}}(y-\cm(y, \stepLow(y)))\| &\ge \stepLow(y) \inf_{\vn \in \rbd{\partial^{N} \funns(\cm(y, \stepLow(y)))}} \|\vn\| \\
                                                                                                &\ge \stepLow(y) \underbrace{\inf_{p\in\N_{\crit[\vxinter]}}\inf_{\vn \in \rbd{\partial^{N} \funns(p)}} \|\vn\|}_{\defeq \cri}.
  \end{align}
  \GBtwodelete{\GBreplace{Since $0 \in \ri \proj_{\normal{\crit[\vxinter]}{\Minter}} \partial \funns(\crit[\vxinter])$ and $\partial \funns$ is inner-semicontinuous, the former property actually holds on a neighborhood of $\crit[\vxinter]$ in $\Minter$, thus making the constant $\cri$  positive.}
  {By partial smoothness, $\partial \funns$ is continuous on $\Minter$ at $\crit[\vxinter]$, and thus in particular inner-semicontinuous.
    The inclusion $0 \in \ri \proj_{\normal{\crit[\vxinter]}{\Minter}} \partial \funns(\crit[\vxinter])$ therefore holds on a neighborhood of $\crit[\vxinter]$ on $\Minter$~\cite[Lemma 20]{daniilidis2006geometrical}, thus making the constant $\cri$  positive.}}
  \GBtwoedit{
    We note that the constant $\cri$ is positive.
    Indeed, $\vxinter \mapsto \grad \funns(\vxinter) = \proj_{\normal{\vxinter}{\Minter}} \partial \funns(\vxinter)$ is a continuous selection of the affine hull of $\partial \funns$, and $\grad \funns(\crit[\vxinter]) \in \ri \partial \funns(\crit[\vxinter])$ by \cref{prop:normalascent}.
    Lemma 20 from~\cite{daniilidis2006geometrical} then guarantees that $\grad \funns(\vxinter) \in \ri \partial \funns(\vxinter)$ for $\vxinter$ close enough to $\crit[\vxinter]$.
    Projecting back on the normal space at $\vxinter$ provides the inclusion $0 \in \ri \proj_{\normal{\crit[\vxinter]}{\Minter}} \partial \funns(\crit[\vxinter])$ a neighborhood of $\crit[\vxinter]$ on $\Minter$.
    This implies positivity of $\cri$, reducing the size of $\N_{\crit[\vxinter]}$ if necessary.}
  We note that this kind of quantity also appears as the \emph{modulus of identifiability} in the recent~\cite[Def. 2.3]{lewis2022identifiability} where it has the same property: its positivity enables the identification of the associated structure manifold.

  Using \Cref{prop:curve}, the left-hand side is upper
  bounded by a simpler %
  expression:
  \begin{align}
    \constcurve \stepLow(\vxinter)^{2} + \distM[\Minter](\vxinter) \ge \cri \stepLow(\vxinter), \quad \text{ that is } \quad \stepLow(\vxinter) \le
    \frac{\cri}{2\constcurve} \left( 1- \sqrt{1 - \frac{4\constcurve \distM[\Minter](\vy)}{\cri^{2}}} \right),
  \end{align}
  which provides the expression for $\stepLowBnd^{\funns}$ used in the lemma.

\begin{proof}
  \noindent\emph{Item i)}
  We first consider $\Psi_{\crit[\vxinter]}(\cdot) = \Psi(\crit[\vxinter], \cdot)$.
  Since $\crit[\vxinter]\in\Minter$, \cref{lem:smoothcurve} tells us that $\cm(\crit[\vxinter], \step) = \crit[\vxinter] - \step \grad \funns(\crit[\vxinter]) + o(\step)$, and thus
  \begin{align}
    \Psi_{\crit[\vxinter]}(0) = \proj_{\normal{\crit[\vxinter]}{\Minter}}\left(-\grad \funns(\crit[\vxinter]) + \partial \funns(\crit[\vxinter])\right) = \proj_{\normal{\crit[\vxinter]}{\Minter}}(\partial \funns(\crit[\vxinter]))
  \end{align}
  where we used that $\grad \funns(\crit[\vxinter])\in\tangent{\crit[\vxinter]}{\Minter}$ is orthogonal to $\normal{\crit[\vxinter]}{\Minter}$.
  \Cref{prop:normalascent} provides that  $0 \in \ri \Psi_{\crit[\vxinter]}(0)$.
  We now turn to show that there exists $\constalt$ such that, for all $\step \in [0, \constalt]$, $0 \in \ri \Psi_{\crit[\vxinter]}(\step)$.

  By contradiction, assume there exist a sequence $\curr[\step]\to 0$ such that $0\notin \ri\Psi_{\crit[\vxinter]}(\curr[\step])$.
  This means that there exists a sequence of unit norm vectors $( \curr[s] )$ such that for all $\ite$,
  \begin{align}\label{eq:sepbypsi}
     \langle \curr[s], z \rangle \le 0 \text{ for all } z \in \Psi_{\crit[\vxinter]}(\curr[\step]).
  \end{align}
  As a bounded sequence, $\curr[s]$ admits at least one limit point, say $\crit[s]$.
  Take $\crit[z]\in\Psi_{\crit[\vxinter]}(0)$.
  The continuity of $\partial \funns$ (by partial smoothness, item iv), of $\step\mapsto (\cm(\crit[\vxinter], \step) - \crit[\vxinter])/\step$ (by smoothness of $\cm$), and of $\step \mapsto \proj_{\normal{\cm(\crit[\vxinter], \step)}{\Minter}}$  (by smoothness of $\Minter$) yield the continuity of $\Psi_{\crit[\vxinter]}$ as a set-valued map.
  This mapping is thus inner-semicontinuous~\cite[Def.~5.4]{rockafellar2009variational},  so there exists a sequence $\curr[z] \in \Psi_{\crit[\vxinter]}(\curr[\step])$ such that $\curr[z]$ converges to $\crit[z]$.
  Taking the correct subsequence and renaming iterates, we can write $\curr[s] \to \crit[s]$ and $\curr[z] \to \crit[z]$.
  Equation~\eqref{eq:sepbypsi} provides $\langle \curr[s], \curr[z] \rangle\le 0$ for all $\ite$, which gives at the limit $\langle \crit[s], \crit[z] \rangle \le 0$.
  This actually holds for all $\crit[z] \in \Psi_{\crit[\vxinter]}(0)$: $\crit[s]$ separates $0$ and $\Psi(0)$, which contradicts %
  $0\in\ri\Psi_{\crit[\vxinter]}(0)$.

  Finally, let us take the constant $\const$ such that $[0,\const]$ is included in both $[0,\constalt]$ and the neighborhood of $0$ provided by~\cref{lem:smoothcurve}.
  Then, for any $\step \in [0,\const]$, adding the two orthogonal inclusions $0 \in \ri \Psi_{\crit[\vxinter]}(\step)$ and $0 = \Phi(\vxinter,\step,\GBreplace{c}{\cm}(\vxinter, \step))$,  we obtain that $\cm(\crit[\vxinter], \step)$ verifies~\eqref{eq:proxOCri} with $y = \crit[\vxinter]$.

  \medskip

  \noindent\emph{Item ii)}
  Let $\step\in[0, \const]$.
  We turn to show the existence of a neighborhood $\N_{\crit[\vxinter]}^{\step}$ of $\crit[\vxinter]$ such that, for all $\vxinter \in \N_{\crit[\vxinter]}^{\step}$, $\cm(\vxinter, \step)$ verifies~\eqref{eq:proxOCri}.
  By contradiction, assume that there exists a sequence $(\curr[\vxinter])$ that converges to $\crit[\vxinter]$ such that~\eqref{eq:proxOCri} fails for $(\curr[\vxinter], \step)$.
  Since the tangent component of~\eqref{eq:proxOCri} does hold, necessarily $0 \notin \ri \Psi(\curr[\vxinter], \step)$.
  However, the mapping $\vxinter \mapsto \Psi(\vxinter, \step)$ is inner-semicontinuous (from the same arguments as in the proof of \emph{item i)} and there holds $0\in\ri\Psi(\crit[\vxinter], \step)$.
  A reasoning similar to that of \emph{item i)} reveals the contradiction.

  \medskip

  \noindent\emph{Item iii)}
  Define $\N_{\crit[\vxinter]}$ a neighborhood of $\crit[\vxinter]$ and $\stepUpBnd^{\funns} $ a positive constant such that \Cref{prop:curve} applies over $\N_{\crit[\vxinter]}\times [0, \stepUpBnd^{\funns}]$,
  \GBreplace{$\N_{\crit[\vxinter]}$ is contained in $\cup_{\step \in [0, \const]}\N_{\crit[\vxinter]}^{\step} \cap \N_{\crit[\vxinter]}^{\const}$ }{}and
  $0\in \ri \Psi(\vxinter, \step)$ holds for all $(\vxinter, \step) \in \N_{\crit[\vxinter]}\times [0, \stepUpBnd^{\funns}]$.
  \GBreplace{This last}{The second} condition can be met on a nontrivial neighborhood of $(\crit[\vxinter], 0)$: it holds at that point, and $\Psi$ is inner-semicontinuous ($\cm(\vxinter, \step)$ lies on $\Minter$ and $\partial \funns$ is inner-semicontinuous by partial smoothness of $\funns$).

  Let $\vxinter\in\N_{\crit[\vxinter]}$ and \GBreplace{$\step>0$ such that $\stepLowBnd^{\funns}(\distM[\Minter](y)) \le \step \le \stepUpBnd^{\funns}$}{$\step \in [\stepLowBnd^{\funns}(\distM[\Minter](y)), \stepUpBnd^{\funns}]$}.
  \GBedit{
  We show that $0 \in \ri \Psi(y, \gamma)$, that is
  \begin{equation}
    \proj_{\normal{\cm(\vxinter, \step)}{\Minter}}(\vxinter - \cm(\vxinter, \step)) \in \step \ri \partial^{N} \funns\left(\cm(\vxinter, \step)\right).
  \end{equation}
  Combining this with the orthogonal inclusion $0 = \Phi(y, \gamma, \cm(y,
  \gamma))$ yields the claim.
  }

  The \GBreplace{lower bound on $\step$}{inequality $\stepLowBnd^{\funns}(\distM[\Minter](y)) \le \step$} implies \GBreplace{that }{}$\constcurve \step^{2} + \distM(\vxinter) \le \step \cri$.
  We have successively \GBreplace{by \Cref{prop:curve}}{by definition of $\N_{\crit[\vxinter]}$} and the above bound that
  \begin{align}
    \| \proj_{\normal{\cm(\vxinter, \step)}{\Minter}}(\vxinter-\cm(\vxinter, \step))\| \le \distM(\vxinter) + \constcurve \step^{2} & \le \step \cri \\
    &\le \step \inf \{ \|n\|, n \in \rbd\partial^{N} \funns(e(\vxinter, \step))\}.
  \end{align}
  This means that $\proj_{\normal{\cm(\vxinter, \step)}{\Minter}}(\vxinter-\cm(\vxinter, \step))$ belongs to the ball of center $0$ and radius $ \step \inf \{ \|n\|, n \in \rbd\partial^{N}(\funns(e(\vxinter, \step)))\}$ in $\normal{\cm(\vxinter, \step)}{\Minter}$.
  \GBreplace{In addition}{Besides}, this ball is included in $\step \partial^{N}(\funns(e(\vxinter, \step))$ since $0\in\partial^{N} \funns(e(\vxinter, \step)$ by definition of $ \N_{\crit[\vxinter]}$.
  Therefore, $0\in \ri\Psi(\vxinter, \step)$ for all $\vxinter \in \N_{\crit[\vxinter]}$ and $\step \in [\stepLowBnd^{\funns}(\distM[\Minter](y)), \stepUpBnd^{\funns}]$.
\end{proof}

\subsubsection{\GBedit{Part 3: From the intermediate space to the input space}}

\FIedit{
To conclude the proof of \Cref{th:proxintspace}, we will first identify the curve  $\cm(\vxinter, \step)$ to $\prox_{\step \funns}(y)$ and thus prove that it belongs to the sought manifold, as illustrated in \cref{fig:proxneighprox}. Then, this intermediate identification result is brought back to the input space using transversality. 
}

\begin{proof}
  The standing assumptions allow to call~\Cref{lem:fullcurve} at point $\mapping(\crit[\vx])$ with manifold $\Minter$.
  This yields the neighborhood $\N_{\mapping(\crit[\vx])}$,  constants $\stepUpBnd^{\funns}$ and $\const$, a function $\stepLowBnd^{\funns}$, and a smooth mapping $\cm : \N_{\mapping(\crit[\vx])}\times [0, \const] \to \Minter$ such that, for $\vxinter\in\N_{\mapping(\crit[\vx])}$ and $\step \in [\stepLowBnd^{\funns}(\distM[\Minter](\vxinter)), \stepUpBnd^{\funns}]$, $\cm(\vxinter, \step)$ verifies the optimality condition~\eqref{eq:proxOCri} of $\cm(\vxinter, \step) = \prox_{\step \funns}(y)$.
  Besides, since $\funns$ is prox-regular and prox-bounded at point $\mapping(\crit[\vx])$, these properties also hold on a neighborhood of that point.
  Under these conditions,~\Cref{prop:wellbehavedprox} allows to recover the equality $\cm(\vxinter, \step) = \prox_{\step \funns}(\vxinter)$.
  Take $\N_{\crit[\vx]} = \mapping^{-1}(\N_{\mapping(\crit[\vx])})$, a neighborhood of $\crit[\vx]$ as the preimage of a neighborhood of $\mapping(\crit[\vx])$ by the continuous $\mapping$.
  For all $\vx \in \N_{\crit[\vx]}$,
  \begin{align}
    \prox_{\step \funns}(\mapping(\vx)) \in \Minter \text{ for all } \step\in [\stepLowBnd^\funns(\distM[\Minter](\mapping(\vx))), \stepUpBnd^\funns].
  \end{align}

  We turn to show that, for some constant $\cmap >0$, there holds $\distM[\Minter](\mapping(\vx)) \le \cmap \distM(\vx)$ for all $\vx\in\N_{\crit[\vx]}$.
  Let $\vx\in\N_{\crit[\vx]}$ and $\vxman = \proj_{\M}(x)$, so that $\distM(\vx) = \|\vxman - \vx\|$.
  Using successively that $\mapping(\vx^{\M}) \in \Minter$ and smoothness of $\mapping$, there holds for $\vx$ near $\crit[\vx]$
  \begin{align}
    \distM[\Minter](\mapping(\vx)) &\le \|\mapping(\vx) - \mapping(\vxman)\| \\
                                   &\le \|\Jac_{\mapping}(\vxman)\cdot (\vx - \vxman)\| + \bigoh(\|\vx - \vxman\|^{2}) \\
                                   &\le \left(\sup_{\vn \in \normal{\vxman}{\M}, \|\vn\| = 1} \|\Jac_{\mapping}(\vxman) \cdot \vn\| \right) \|\vx - \vxman\| + \bigoh(\|\vx - \vxman\|^{2}) \\
                                   &\le \underbrace{\left(\sup_{\vxalt \in \N_{\crit[\vx]}}\sup_{\vn \in \normal{\vxalt}{\M}, \|\vn\| = 1} \|\Jac_{\mapping}(\vxalt) \cdot \vn\| \right)}_{\conststep} \|\vx - \vxman\| + \bigoh(\|\vx - \vxman\|^{2}).
  \end{align}
  \GBedit{
    We show by contradiction that the constant $\conststep$ is positive.
    If $\conststep = 0$, there exists $\vn \in \normal{\crit}{\M}$ of unit norm such that $\D \mapping(\crit) \vn = 0$.
    By \cref{eq:normalspaceintertoinput}, we have $\vn = \D \mapping(\crit)^{*} \vnalt$ for some $\vnalt\in\normal{\mapping(\crit)}{\Minter}$, so that $\D \mapping(\crit) \D \mapping(\crit)^{*} \GBtwoedit{\vnalt} = 0$.
    Pre-multiplying by $\vnalt^{*}$ yields $\|\D \mapping(\crit)^{*} \vnalt\|^{2} = 0$: there holds $\vnalt \in \ker({\D \mapping(\crit)}^{{*}}) \cap \normal{\mapping(\crit)}{\Minter}$.
    The transversality condition \cref{eq:transversality} implies $\vnalt = 0$, and in turn $\vn = 0$, which contradicts the fact that this vector has unit length.
  }

  Therefore, for all $\vx \in \N_{\crit[\vx]}$ and a constant $\cmap > \conststep$, there holds $\distM[\Minter](\mapping(\vx)) \le \cmap \distM(\vx)$.
  Monotony of $\stepLowBnd^{\funns}$ implies that $\stepLowBnd^{\funns}(\distM[\Minter](\mapping(\vx))) \le \stepLowBnd^{\funns}(\cmap \distM(\vx))$\GBreplace{.
  Hence}{, which yields} the claimed bounds with
  \begin{align}
    \stepLowBnd(t) = \frac{\cri}{2\constcurve} \left( 1- \sqrt{1 - \frac{4\constcurve\cmap t}{\cri^{2}}} \right)  \quad \text{ and } \quad \stepUpBnd = \stepUpBnd^{\funns}.
  \end{align}

  Finally, we show the existence of positive constants $\epsilon$, $L$ such that
  \begin{align}
    \|\vx - \crit\| \le \epsilon \text{ and } L\|\vx - \crit\| \le \step \le \stepUpBnd \Longrightarrow \prox_{\step \funns}(\mapping(\vx)) \in \Minter.
  \end{align}
  Since $\crit\in\M$, $\distM(\cdot) \le \|\cdot - \crit\|$.
  By monotony and smoothness of $\stepLowBnd$, there exists $L>0$ such that $\stepLowBnd(\distM[\Mopt](\cdot)) \le L\|\cdot - \opt[\vx]\|$ over $\B(\opt, \epsilon)$.
  Reducing $\epsilon$ if necessary so that $L\epsilon < \stepUpBnd$ yields the result.
\end{proof}

\section{\GBreplace{Proposed method}{A local Newton algorithm for nonsmooth composite minimization}}%
\label{sec:method}

In this section, we use the results of~\Cref{sec:identif-prox} to propose an optimization method that locally identifies the structure of a minimizer and converges quadratically to this point.

Recall the basic idea: if the optimal manifold $\opt[\M]$ corresponding to a minimizer $\opt$ is known, the \emph{nonsmooth} optimization problem turns into a \emph{smooth constrained} optimization problem.
In turn, this problem can be solved using algorithms from smooth constrained optimization such as Sequential Quadratic Programming. %

Using this idea and the structure identification mechanism developed in the previous section, we propose a method which: i) uses the proximity operator of $\funns$ to gather structure in the intermediate space, ii) brings back this structure to the input space, and iii) optimizes smoothly along the identified manifold. The resulting algorithm is precisely described in \Cref{sec:desc} and then analyzed in \Cref{sec:conv}.

\subsection{Description of the algorithm}\label{sec:desc}
We proceed to describe the three steps exposed above.
The full algorithm is depicted in~\cref{alg:localcomposition}.

\emph{Gathering structure.}
We showed in \cref{th:proxintspace} that near a qualified point in $\inputSpace$, the operator $\prox_{\step \funns}(\mapping(\cdot))$ provides the optimal structure $\opt[\Minter]$ (in the intermediate space $\interSpace$) for an explicit range of steps.
We thus define from the current iterate $\curr[\vx]\in\inputSpace$ and stepsize $\curr[\step]$ the working manifold $\currinter[\M]$ (in the intermediate space) as the structure of $\prox_{\curr[\step] \funns}(\mapping(\curr[\vx]))$.
One technical point is to guarantee that, after some time, $\curr[\step] \in  [L\|\curr - \opt\|, \stepUpBnd]$ so that the optimal manifold is identified; this is done by decreasing $\curr[\step]$ linearly at each iteration.

\emph{From the intermediate to the input space.}
We now have a structure manifold $\currinter[\M]$ in the intermediate space, and can define $\currFsext[\funns]$, a smooth extension of $\funns$ on $\currinter[\M]$ to $\interSpace$.
Using a local equation\;$\currinter[\maneq]$ of $\currinter[\M]$, we define the smooth map $\curr[\maneq] = \currinter[\maneq] \circ \mapping:\inputSpace\to\bbR^{\curr[p]}$, which locally defines $\curr[\M] = \mapping^{-1}(\currinter[\M])$.
Similarly, a smooth extension of $\funcomp$ on $\curr[\M]$ %
is defined by $\curr[\Fsext] = \currFsext[\funns] \circ \mapping$.

\emph{Optimizing in the input space.}
We can now take steps to minimize the smooth extension $\curr[\Fsext]$ on the smooth set $\curr[\M]$ characterized by $\curr[\maneq](\vx) = 0$:
\begin{align}\label{eq:pbcstrsmooth}
  \min_{\vx\in\inputSpace} \curr[\Fsext](\vx) \quad \text{s.t.} \quad \curr[\maneq](\vx) = 0.
\end{align}
We turn to an elementary version of the traditional second-order Sequential Quadratic Programming methodology; see \eg \cite[Chap.\;14]{bonnans2006numerical}.
At iteration $\ite$, the SQP direction $\currdSQP$ at point $\curr[\vx]$ is defined as the solution of the following quadratic problem:
\begin{align}\label{eq:SQPstep}
  \begin{aligned}
    \currdSQP = \argmin_{d\in\inputSpace} \quad & \langle \nabla\curr[\Fsext](\curr[\vx]), d \rangle + \frac{1}{2}\langle \nabla^{2}_{xx}\curr[L](\curr[\vx], \curr[\Lagmult](\curr[\vx]))d , d \rangle \\
    \textrm{s.t.} \quad & \curr[\maneq](\curr) + \D \curr[\maneq](\curr) d = 0
  \end{aligned}
\end{align}
where $\nabla^{2}_{xx} \curr[L]$ denotes the Hessian of the Lagrangian $\curr[L](\vx, \Lagmult) = \curr[\Fsext](\vx) + \langle \Lagmult, \curr[\maneq](\vx) \rangle$, and  the multiplier $\curr[\Lagmult](\curr)$ defined from the following least-squares problem:
\begin{align}\label{eq:leastsquaremult}
  \curr[\Lagmult](\curr) = \argmin_{\Lagmult\in\bbR^{\curr[p]}} \left\| \nabla \curr[\Fsext](\curr) + \sum_{i=1}^{\curr[p]} \Lagmult_{i} \nabla \maneq_{k,i}(\curr) \right\|^{2}.
\end{align}

Finally, we check that $\curr + \currdSQP$ provides a functional decrease in order to avoid degrading the iterate when the current structure is suboptimal. If the test is not verified, $\curr$ is not updated and $\curr[\step]$ is decreased until a satisfying structure is detected.

\begin{algorithm}
    \caption{General structure exploiting algorithm\label{alg:localcomposition}}
    \begin{algorithmic}[1]
        \Require Pick $\init$ near a minimizer, $\init[\step]$ large enough.
        \Repeat
        \State $\curr[\step] = \frac{\prev[\step]}{2}$
        \State Compute $\prox_{\curr[\step] \funns}(\mapping(\curr))$ and obtain $\currinter[\M]$ locally defined by $\currinter[\maneq]$
        \State $\curr[\maneq] = \currinter[\maneq] \circ \mapping$ (local equation of  $\curr[\M]$), $\curr[\Fsext] = \currFsext[\funns] \circ \mapping$ (smooth extension)
        \State Compute $\currdSQP$ by solving \eqref{eq:SQPstep}
        \If{$\funcomp(\curr + \currdSQP) \le \funcomp(\curr)$}
        \State $\next = \curr + \currdSQP$
        \Else
        \State $\next = \curr $
        \EndIf        
        \Until{stopping criterion}
    \end{algorithmic}
\end{algorithm}

\GBedit{
  \begin{remark}[Complexity of one iteration]
    The main computational cost of one iteration of \cref{alg:localcomposition} consists in the resolution of the quadratic program~\eqref{eq:SQPstep}.
    Its plain resolution incurs a $\bigoh(n^{3})$ complexity.
    However, efficient approaches \emph{reduce} this problem to a quadratic program on the subspace $\ker \D h_{k}(x_{k})$, which has dimension $\dim(\M_{k})$.
    We refer to~\cite[Chap. 14]{bonnans2006numerical} for an in-depth exposition of these techniques.
    The cost of an iteration is thus $\bigoh(\dim(\M_{k})^{3})$.
    In situations where minimizers are highly structured (\ie $\dim(\M^{\star}) \ll n$) this complexity may be comparable with the $\bigoh(n^{2})$ iteration complexity of classical nonsmooth optimization algorithms, such as nonsmooth BFGS~\cite{lewisNonsmoothOptimizationQuasiNewton2013}.
  \end{remark}
}

\subsection{Convergence of~\Cref{alg:localcomposition}}\label{sec:conv}
We proceed to give the result guaranteeing identification and local quadratic  convergence of~\Cref{alg:localcomposition}.

In order to benefit from the quadratic rate of SQP, the elements of~\eqref{eq:SQPstep} should have the minimal regularity typically required by smooth constrained Newton methods (see \eg~\cite[Th.\;14.5]{bonnans2006numerical}); we thus make the following assumption.
\begin{assumption}[Regularity of functions]\label{asm:funreg}
  The smooth extension and the manifold defining map are $\C^{2}$ with Lipschitz second derivatives, and the Jacobian of the constraints is full rank near the solution.
\end{assumption}

In order to focus on the algorithmic originality of the method, we slightly simplify the situation and make the two following algorithmic assumptions.
\begin{assumption}[Nonconvex stability]\label{asm:noncvxstab}
  The iterates of~\Cref{alg:localcomposition} remain in the \GBtworeplace{connex}{connected} component of the sublevel set $\{\vx : \funcomp(\vx) \le \funcomp(\init)\}$ that contains $\opt$.
\end{assumption}
This assumption ensures that an update that decreases the functional value remains in the neighborhood of the minimizer $\opt$.
It is naturally satisfied when $\funcomp$ is convex, or when $\opt$ is a global minimizer of $\funcomp$ and $\init$ is close enough to $\opt$.

\begin{assumption}[No Maratos effect]\label{asm:nomaratos}
  The iterates of~\Cref{alg:localcomposition} are such that a step $d$ that makes $\vx + d$ quadratically closer to $\vx$ yields descent: $\funcomp(\vx + d) \le \funcomp(\vx)$.
\end{assumption}
In smooth constrained optimization, getting closer (even at quadratic rate) to a minimizer does not imply decrease of objective value and constraint violation (measured by a merit function).
This so-called Maratos effect (see \eg~\cite{bonnans2006numerical}) is one of the main difficulties in globalizing SQP schemes, which is out of the scope of the current paper. We thus assume this effect does not affect our algorithm in theory, and use in practice one of the successful refinements, as discussed in~\Cref{sec:numsetup}.

We are now ready for the main convergence result of~\Cref{alg:localcomposition}, which establish that, after some finite time, the iterates identify exactly the optimal manifold and converge to the minimizer at a quadratic rate.

\begin{theorem}[Exact identification and quadratic convergence]\label{th:algIdentifQuadrate}
  Consider a function $\funcomp = \funns\circ \mapping$ and $\opt$ a strong minimizer,\footnote{There exists $\eta>0$, $\epsilon>0$ such that  $\funcomp(\vx) \ge \funcomp(\opt) + \eta \|\vx - \opt\|^{2}$ for all $\vx \in\B(\opt, \epsilon)$.} qualified relative to the optimal manifold $\opt[\M]$.
  Assume that the smooth extension $\Fsext$ of $\funcomp$ relative to $\opt[\M]$ and the corresponding manifold defining map $\maneq$ satisfy~\Cref{asm:funreg}.

  If $\init$ and $\funcomp(\init)$ are close enough to $\opt$ and $\funcomp(\opt)$, $\init[\step]$ is large enough and the simplifying algorithmic~\Cref{asm:noncvxstab,asm:nomaratos} hold, then there exists $C>0$ such that the iterates $(\curr, \curr[\M])$ generated by~\Cref{alg:localcomposition} verify:
  \begin{align}
    \curr[\M] = \opt[\M] \quad \text{and} \quad \|\next - \opt\| \le C \| \curr - \opt \|^{2} \quad \text{for all } k \text{ large enough. }
  \end{align}
\end{theorem}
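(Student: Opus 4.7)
The plan is to combine the identification guarantee from \cref{th:proxintspace} with classical local quadratic convergence theory for SQP on smooth constrained problems. The proof proceeds in three phases: a ``waiting'' phase where $\curr[\step]$ shrinks toward a useful range, a first identification, and then an inductive quadratic phase in which identification self-sustains.

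First I would apply \cref{th:proxintspace} at $\opt$ relative to $\opt[\M]$, obtaining constants $L,\epsilon,\stepUpBnd>0$ such that whenever $\|\vx-\opt\|\le\epsilon$ and $L\|\vx-\opt\|\le\step\le\stepUpBnd$, the prox outputs a point on $\opt[\Minter]$, so that $\currinter[\M]=\opt[\Minter]$ and, by transversality, $\curr[\M]=\opt[\M]$. Since $\curr[\step]=\init[\step]/2^{k}$, there exists a finite $K_{0}$ with $\curr[\step]\le\stepUpBnd$ for all $k\ge K_{0}$. Using \cref{asm:noncvxstab} and the strong-minimizer assumption, the iterates lie in the component of $\{\vx:\funcomp(\vx)\le\funcomp(\init)\}$ containing $\opt$, whose diameter is controlled by a function of $\funcomp(\init)-\funcomp(\opt)$ that shrinks to $0$ as $\init\to\opt$. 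Choosing $\init$ close enough and $\init[\step]$ large enough, we can ensure that for $k=K_{0}$ we simultaneously have $\curr[\step]\in[L\|\curr-\opt\|,\stepUpBnd]$ and $\|\curr-\opt\|\le\epsilon$. \cref{th:proxintspace} then yields $\curr[\M]=\opt[\M]$ at iteration $K_{0}$.

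Second, once the optimal manifold is identified at some iteration $\bar k$, the subproblem \eqref{eq:SQPstep} coincides with a standard SQP step for the smooth equality-constrained problem $\min\Fsext(\vx)$ s.t.\ $\maneq(\vx)=0$, where $\Fsext$ and $\maneq$ are the smooth extension and defining map of $\opt[\M]$. By \cref{asm:funreg}, the strong minimality of $\opt$ (which yields the second-order sufficient condition for the reduced problem), and the full-rank Jacobian assumption, classical SQP theory (e.g.\ \cite[Th.\;14.5]{bonnans2006numerical}) provides a constant $C>0$ and a neighborhood of $\opt$ on which the unit SQP step satisfies $\|\curr+\currdSQP-\opt\|\le C\|\curr-\opt\|^{2}$. \cref{asm:nomaratos} ensures that the functional test $\funcomp(\curr+\currdSQP)\le\funcomp(\curr)$ is passed, so the update $\next=\curr+\currdSQP$ is accepted and the quadratic bound propagates.

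Third, I would run a short induction to show identification persists. Assuming $\curr[\M]=\opt[\M]$, $\|\curr-\opt\|$ small, and $\curr[\step]\le\stepUpBnd$, the quadratic bound gives $L\|\next-\opt\|\le LC\|\curr-\opt\|^{2}$, while $\next[\step]=\curr[\step]/2\ge L\|\curr-\opt\|/2$. As soon as $\|\curr-\opt\|\le 1/(2C)$, this yields $L\|\next-\opt\|\le\next[\step]\le\stepUpBnd$, so \cref{th:proxintspace} applies at iteration $k+1$ and $\next[\M]=\opt[\M]$. Iterating closes the proof.

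The main obstacle will be controlling the ``waiting phase''. Because $\curr[\step]$ is halved unconditionally while the iterates may drift under rejected-descent or wrong-manifold SQP moves, one must guarantee that by the time $\curr[\step]$ becomes $\le\stepUpBnd$ the iterate is still well inside $\B(\opt,\epsilon)$ and $\curr[\step]\ge L\|\curr-\opt\|$ remains achievable. This is exactly where the closeness hypotheses on $\init$, $\funcomp(\init)$ and $\init[\step]$, together with the sublevel-set confinement of \cref{asm:noncvxstab}, are used to synchronize the two bounds and trigger the first identification.
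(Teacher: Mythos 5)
Your proposal is correct and follows essentially the same route as the paper's proof: identification via \Cref{th:proxintspace}, local quadratic (hence locally $1/2$-linear) convergence of the SQP step from \cite[Th.\;14.5]{bonnans2006numerical} once the optimal manifold is detected, and an induction showing that $\curr[\step]\in[L\|\curr-\opt\|,\stepUpBnd]$ persists because the iterate contracts at least as fast as the stepsize is halved. The paper resolves the ``waiting phase'' exactly as you anticipate, by taking $\epsilon\le\stepUpBnd/(2L)$ and using the functional-decrease test together with the strong-minimizer inequality to confine all iterates to $\B(\opt,\epsilon)$ until the geometrically decreasing stepsize first enters the identification window.
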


The proof of this result consists in two steps.
We first show the existence of a neighborhood of initialization on which the proximity operator will eventually identify the optimal manifold, once the stepsize has been sufficiently decreased.
From this point onward, we prove that the SQP step provides a quadratic improvement and that the stepsize policy makes the manifold identification stable.

\begin{proof}
  \noindent\emph{Local identification of the optimal structure.}
  By \cref{th:proxintspace}, there exists a ball centered around $\opt$ of radius $\epsilon_1 > 0$ and two positive constants $L$, $\Gamma$ such that, for all $\vx \in \ball(\opt, \epsilon_1)$ and $\step\in [L\|\vx-\opt\|, \stepUpBnd]$, $\prox_{\step \funns}(\mapping(\vx))$ belongs to the optimal manifold $\opt[\Minter] = \mapping(\Mopt)$.

  \medskip

  \noindent\emph{Local quadratic convergence of SQP on the optimal structure.}
  Let us assume that the optimal manifold has been identified. The least square multiplier $\Lagmult$ is defined by the optimality condition of~\eqref{eq:leastsquaremult}:
    \begin{align}
      \Lagmult(\vx) = - [\Jac_{h}(\vx) \Jac_{h}(\vx)^{\top}]^{-1} \Jac_{h}(\vx) \nabla \Fsext (\vx).
    \end{align}
    and since $\maneq$ is smooth and its Jacobian is full-rank near $\opt$, $\Lagmult$ is a Lipschitz continuous function near $\opt$.

    Since $\opt$ is a strong minimizer of $\funcomp$, the Hessian of the Lagrangian restricted to the tangent space is positive definite.
      Indeed, since $\opt$ is a strong minimizer of $\funcomp$ on $\opt[\M]$, the Riemannian Hessian relative to the optimal manifold is positive definite.
      With the choice of multiplier~\eqref{eq:leastsquaremult}, the Riemannian Hessian is exactly the Hessian of the Lagrangian restricted to the tangent space to $\opt[\M]$ at $\opt$ (see~\cite[Sec. 7.7]{boumal2022intromanifolds}), which is thus itself positive definite.

  Thus, using the local quadratic convergence of SQP~\cite[Th. 14.5]{bonnans2006numerical}, we get that there exists a ball centered around $\opt$ of radius $\epsilon_2 > 0$ such that the SQP step computed at a point $\vx$ in that neighborhood relative to the optimal manifold provides a quadratic improvement towards $\opt$. Reducing $\epsilon_2$ if necessary, we can in addition have that the convergence is at least linear with rate $1/2$.

  \medskip

  \noindent\emph{Initialization, identification, and quadratic convergence.}
  Let $\epsilon = \min(\epsilon_1,\epsilon_2,\Gamma/(2L))$.
  We will now show that initializing with $\init \in \{\vx : \funcomp(\vx) \le \funcomp(\opt) + \eta\epsilon^{2}\}$ and $\init[\step] \ge \Gamma$ provides the claimed behavior.

  First, the functional decrease test of the algorithm and~\Cref{asm:nomaratos} guarantee that all iterates satisfy $\funcomp(\curr) \le \funcomp(\init)$.
  Using that $\opt$ is a strong minimizer, we get that $\eta \|\curr - \opt\|^{2} \le \funcomp(\curr) - \funcomp(\opt) \le \funcomp(\init) - \funcomp(\opt)\le \eta \epsilon^{2}$, and thus that the iterates remain in $\B(\opt, \epsilon)$.

  Second, as $L\|\vx - \opt\| \le \Gamma / 2$ for all $\vx\in\B(\opt, \epsilon)$ by construction, the fact that $\init[\step] > \Gamma$ and $(\curr[\step])$ decreases with geometric rate $1/2$ implies that there exists $K$ such that $L\|\vx_{K} - \opt\| \le \step_{K} \le \Gamma$.

  Now, assume that at iteration $k\ge K$, $L\|\vx_{k} - \opt\| \le \step_{k} \le \Gamma$. Since $\vx_k\in\ball(\opt, \epsilon_1)$, we have from above that $\opt[\M]$ is identified.
  Thus, the SQP step is performed relative to the optimal manifold and $\curr+\currdSQP$ brings a linear improvement of factor $1/2$ at least.
  \Cref{asm:noncvxstab} ensures that $\funcomp(\curr + \currdSQP) \leq \funcomp(\curr)$ so that $\next = \curr + \currdSQP$ and thus
  \begin{align}
    L \| \next - \opt \| \le \frac{L}{2} \| \curr - \opt \| \le \frac{\curr[\step]}{2} = \next[\step].
  \end{align}
  This shows that  $L\|\next[\vx] - \opt\| \le \next[\step] \le \Gamma$, which completes the induction.
  We get that $\curr[\step]\in [L\|\curr-\opt\|, \stepUpBnd]$ for all $\ite\ge K$.
  Finally, we have that for all $k\ge K$, $\curr[\M] = \opt[\M]$ and $\next$ is quadratically closer to $\opt$ than $\curr$.
\end{proof}

 \emph{Direct generalizations.}
  \Cref{th:algIdentifQuadrate} actually holds for any decrease factor of $\curr[\step]$ in $(0, 1)$ with the presented SQP update, or actually any superlinearly convergent update (\eg a quasi-Newton type update).
  The above result is also readily adapted to an update that converges merely linearly, as long as its rate of convergence is faster than that of $\curr[\step]$.
  This opens the possibility of using SQP methods that rely only on first-order information (see \eg~\cite{bolteMajorizationMinimizationProceduresConvergence2016}).

\section{Numerical experiments}%
\label{sec:numexps}

In this section, we provide numerical illustrations for our results.
Our goal here is twofold:
\begin{enumerate}
  \item to illustrate the identification of the optimal manifold by the proximity operator near a minimizer as provided by~\Cref{th:proxintspace};
  \item to demonstrate the applicability of~\Cref{alg:localcomposition} and observe the quadratic rates predicted by~\Cref{th:algIdentifQuadrate} on our running examples.
\end{enumerate}

\subsection{Test problems}%
\label{sec:numexps_pbs}

We first consider the minimization of a pointwise maximum of smooth functions \eqref{example:maxofsmooth}:
\begin{align}
  \min_{\vx\in\inputSpace} \max_{i = 1, \ldots, m} (\mapping_{i}(\vx)).
\end{align}
We take the celebrated \texttt{MaxQuad} instance, where $n = 10$, $m = 5$ and each $\mapping_{i}$ is quadratic convex, making the whole function $\funcomp$ convex~\cite[p. 153]{bonnans2006numerical}.
In this instance, the optimal manifold is $\MI$ with $I = \{2, 3, 4, 5\}$. %

Second, we consider the minimization of the maximum eigenvalue of an affine mapping \eqref{example:eigmax}:
\begin{align}
  \min_{\vx\in\inputSpace} ~\lammax\left(A_{0} + \sum_{i=1}^{n} x_{i}A_{i}\right).
\end{align}
We take $n=25$ and we generate randomly $n+1$ symmetric matrices of size $50$.
In this  %
instance, the multiplicity of the maximum eigenvalue at the minimizer is $r = 3$.

\subsection{Numerical setup}%
\label{sec:numsetup}

All the algorithms are implemented in Julia~\cite{bezanson2017julia}; experiments may be reproduced using the code available online\footnote{
  \GBedit{See \url{https://github.com/GillesBareilles/LocalCompositeNewton.jl} for \cref{alg:localcomposition} and \url{https://github.com/GillesBareilles/NonSmoothSolvers.jl} for the baselines.}
}.

\emph{Algorithm.} %
For the initialization of \Cref{alg:localcomposition}, we set $\init[\step]$ as the smallest $\step$ such that $\prox_{\step \funns}(\mapping(\init))$ has the most structure (\eg if $\funns=\max$, we increase $\step$ until the output of the proximity operator sets all coordinates to the same value\GBedit{, and if $\funns=\lammax$, we increase $\step$ until the multiplicity of the maximal eigenvalue of the output of the proximity operator is maximal}).
We solve the quadratic subproblem~\eqref{eq:SQPstep} providing the SQP step by the reduced system approach presented in~\cite[p.\;133]{bonnans2006numerical}.
Tangent vectors are expressed in an orthonormal basis of the nullspace of the Jacobian of the constraints at the current iterate.
At iterate $\curr$, a second-order correction step $\dcorr[\curr]$ is added to the SQP step $\dSQP[\curr]$.
It is obtained as $\dcorr[\curr] = \argmin_{d\in\inputSpace}\{\|\maneq(\curr) + \Jac_{\maneq}(\curr)\;d\|, \text{ s.t. } d\in\Im\; \Jac_{\maneq}(\curr)^{\top}\}$.
The full-step is thus $\curr + \dSQP[\curr] + \dcorr[\curr]$.

\paragraph{Baselines}
For the two nonsmooth problems, we compare with the nonsmooth BFGS algorithm of \cite{lewisNonsmoothOptimizationQuasiNewton2013} (nsBFGS) and the gradient sampling algorithm~\cite{burke2020gradient}. %
The nsBFGS method is not covered by any theoretical guarantees; it is known to perform relatively well in practice, often displaying a linear rate of convergence.
In contrast, the Gradient Sampling algorithm generates with probability one a sequence of iterates for which all cluster points are Clarke stationary for $\funcomp$~\cite[Th. 3.1]{burke2020gradient}.\footnote{This holds when $\funcomp$ is locally Lipschitz over $\inputSpace$ and lower bounded, the algorithm iterates indefinitely and the sampling radius decreases to $0$.}

\GBedit{
Other methods could be considered as relevant baselines.
In particular, the minimization of convex composite functions can be tackled with dedicated bundle methods~\cite{sagastizabalCompositeProximalBundle2013}.
Alternatively, some approaches try to estimate and use the optimal structure $\opt[\M]$, leading to potential superlinear convergence: \cite{womersleyAlgorithmCompositeNonsmooth1986} for the maximum of smooth functions, \cite{nollSpectralBundleMethods2005,helmbergSpectralBundleMethod2014} for the maximum eigenvalue, and \cite{mifflin2005algorithm} for general convex functions.
However, the superlinear speed of these methods hinges on the correct identification of the optimal manifold $\opt[\M]$, which is done only heuristically.
We do not include these methods in our numerical comparison since they are rather advanced, and \GBtwoedit{thus} difficult to implement and tune efficiently. %
}

\paragraph{Oracles} %
Traditional methods for nonsmooth optimization, and notably bundle methods, require a first-order oracle:
\begin{align}
  \vx \mapsto (\funcomp(\vx), v) \quad \text{ where } v \in \partial \funcomp(\vx)
\end{align}
while Gradient Sampling and nsBFGS require additionally to know if $\funcomp$ is differentiable at point $\vx$.
\Cref{alg:localcomposition} requires rather different information oracles:
\begin{align}
  \vx &\mapsto \funcomp(\vx) \\
  \vx &\mapsto \Minter \ni \prox_{\step \funns}(\mapping(\vx)) \\
  \M, \vx & \mapsto \maneq(\vx), \Jac_{\maneq}(\vx), \nabla \Fsext(\vx), \nabla^{2} L(\vx, \lambda).
\end{align}
The second part of the oracle provides the candidate structure at point $\vx$.
The last part of the oracle, which requires a point \emph{and a candidate structure}, provides the second-order information of $\funcomp$ required by the SQP step.

\subsection{Experiments}%
\label{subsec:observations}

\Cref{fig:suboptvstime} reports the suboptimality of the considered methods in terms of CPU time and each marker corresponds to one iteration.
All algorithms are initialized at a point $\init$ obtained by running nsBFGS for several iterations.

Our algorithm compares favorably to nsBFGS and Gradient Sampling: it converges in a handful of iterations and less time. %
Note that this happens even though the iteration cost of our algorithm is higher than that of the other methods.
Indeed, the oracles of our method are more complex and a quadratic problem needs to be solved, while the iteration cost of nsBFGS and Gradient Sampling is dominated by the computation of function values and subgradients at each trials of the linesearch.

In terms of identification, our method finds the correct manifold at the first iteration for MaxQuad, and at the third iteration for Eigmax. %
From that point, the iterates of~\Cref{alg:localcomposition} reach machine precision in 3 iterations.
This illustrates the quadratic convergence, and 
supports the idea that, for nondifferentiable problems as well, it is worth computing higher-order information to get fast local methods.

\Cref{fig:stepitBigFloat} allows to observe the identification of the algorithm and the quality of the bounds of~\Cref{th:proxintspace}.
For each iterate $\curr$ of \Cref{alg:localcomposition}, we report the current step $\curr[\step]$ along with the minimal and maximal steps $\stepLow(\curr), \stepUp(\curr)$ such that $\prox_{\step \funns}(\mapping(\curr))$ belongs to the optimal manifold.\footnote{To better illustrate the local behavior of our method, we also ran the algorithms with a high precision floating type. Details and corresponding experiments can be found in \cref{apx:highprec}.}
A first remark is that, as predicted by~\Cref{th:algIdentifQuadrate}, the pair $\curr, \curr[\step]$ satisfies the identification condition $\curr[\step]\in [L\|\curr - \opt, \stepUpBnd]$ after a few iterations.
We also observe that $\stepUp(\curr)$ is near constant and that $\stepLow(\curr)$ converges to zero linearly with $\|\curr - \opt\|$, as predicted by our result.
Finally, we note that even though the initial point is not structured and away from the minimizer ($\|\init - \opt\|\approx 10^{-2}$), the %
initialization of $\init[\step]$ ensures a quick identification.

\begin{figure}[t]
  \centering
  \begin{subfigure}[t]{0.49\textwidth}
    \centering
    \includetikzexpenumcaption{maxquadBGLS_time_subopt}{\vspace*{-3ex}MaxQuad}
  \end{subfigure}\hfill
  \begin{subfigure}[t]{0.49\textwidth}
    \centering
    \includetikzexpenumcaption{eigmax_time_subopt}{\vspace*{-3ex}Eigmax affine}
  \end{subfigure} \\ \vspace{0.2cm}
  \begin{subfigure}[t]{\textwidth}
    \centering
    \includetikzexpenum[0.6]{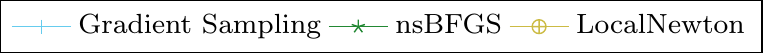}
  \end{subfigure} %
  \caption{Suboptimality vs time (s)\label{fig:suboptvstime}\vspace*{-3.5ex}}
\end{figure}

\begin{figure}[h]
  \centering
  \begin{subfigure}[t]{0.49\textwidth}
    \centering
    \includetikzexpenumcaption{maxquadBGLS_BigFloat_gamma}{\vspace*{-2ex}MaxQuad}
  \end{subfigure}\hfill
  \begin{subfigure}[t]{0.49\textwidth}
    \centering
    \includetikzexpenumcaption{eigmax_BigFloat_gamma}{\vspace*{-2ex}Eigmax affine}
  \end{subfigure} \\ \vspace{0.2cm}
  \begin{subfigure}[t]{\textwidth}
    \centering
    \includetikzexpenum[0.35]{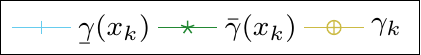}
  \end{subfigure} %
  \caption[Stepsize vs iteration]{Stepsize $\curr[\step]$ vs iteration \label{fig:stepitBigFloat}\vspace*{-2ex}}
\end{figure}

\section{Conclusions}
\label{sec:conclusions}

This paper studies the local structure of functions that write as a composition of a nonsmooth function with a smooth mapping.
When the proximity operator of the nonsmooth function is explicitly available, we show that the structure of the minimizer can be detected.
We further use this information to propose a local Newton method to minimize the objective harnessing the detected structure. This method is guaranteed to identify the structure of the minimizer and to converge quadratically. We illustrate this behavior on two standard nonsmooth problems. %

\appendix

\section{The maximum and maximum eigenvalue satisfy the normal ascent and curve properties}%
\label{sec:addasmproof}
We show here that the maximum and the maximum eigenvalue meet the normal ascent~\Cref{prop:normalascent} and curve properties~\Cref{prop:curve}.
We begin with a lemma that simplifies verification of~\Cref{prop:curve}.

\begin{lemma}\label{lemma:simplecurveproof}
  Consider a function $\funns$, partly smooth at a point $\crit[\vxinter]$ relative to a manifold $\Minter$, and a smooth application $\cm:\N_{\crit[\vxinter]}\times[0, \Tcurve] \to \Minter$ defined for a neighborhood $\N_{\crit[\vxinter]}$ of $\crit[\vxinter]$ and $\Tcurve>0$ such that $\cm(\vxinter,0) = \projM[\Minter](\vxinter)$, $\frac{\dd}{\dd t}e(\vxinter, t)\vert_{t=0} = -\grad \funns(\projM[\Minter](\vxinter))$.

  \noindent If $\D \left(t \mapsto \proj_{\normal{\cm(\vxinter, t)}{\Minter}}(\projM(\vxinter) - \vxinter)\right) = 0$ for all $\vxinter\in\N_{\crit[\vxinter]}$, then $\funns$ satisfies~\Cref{prop:curve} at point $\crit[\vxinter]$.
\end{lemma}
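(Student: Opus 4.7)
The plan is to split the vector inside the norm into two pieces that are each easy to control, and then combine them via the triangle inequality.

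First, using linearity of the orthogonal projection onto $\normal{\cm(\vxinter,t)}{\Minter}$, I would write
$$
\proj_{\normal{\cm(\vxinter,t)}{\Minter}}\bigl(\cm(\vxinter,t)-\vxinter\bigr) \;=\; A(\vxinter,t) + B(\vxinter,t),
$$
where $A(\vxinter,t) := \proj_{\normal{\cm(\vxinter,t)}{\Minter}}(\projM[\Minter](\vxinter)-\vxinter)$ and $B(\vxinter,t) := \proj_{\normal{\cm(\vxinter,t)}{\Minter}}(\cm(\vxinter,t)-\projM[\Minter](\vxinter))$. The term $A$ isolates the normal offset of $\vxinter$ itself, while $B$ captures how far $\cm(\vxinter,t)$ has drifted along $\Minter$.

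Second, I would bound $A$. At $t=0$, the vector $\projM[\Minter](\vxinter)-\vxinter$ already lies in $\normal{\projM[\Minter](\vxinter)}{\Minter}$ by the first-order optimality of the projection, so $A(\vxinter,0) = \projM[\Minter](\vxinter)-\vxinter$ and $\|A(\vxinter,0)\| = \distM[\Minter](\vxinter)$. The standing hypothesis of the lemma states exactly that $\partial_t A(\vxinter,\cdot)$ vanishes (at $t=0$, or everywhere on $[0,\Tcurve]$; both readings suffice). A second-order Taylor expansion in $t$, uniform in $\vxinter$ on a possibly shrunk neighborhood of $\crit[\vxinter]$ by the $\C^{2}$ smoothness of $\cm$ and of the normal projector, yields a constant $C_{A}$ such that $\|A(\vxinter,t)\| \le \distM[\Minter](\vxinter) + C_{A}\,t^{2}$.

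Third, I would bound $B$. Since $\cm(\vxinter,0) = \projM[\Minter](\vxinter)$, one has $B(\vxinter,0) = 0$. Differentiating in $t$ at $t=0$ using the product rule, the contribution from the time-varying projector acting on the zero vector $\cm(\vxinter,0)-\projM[\Minter](\vxinter)$ vanishes, and the remaining term equals $\proj_{\normal{\projM[\Minter](\vxinter)}{\Minter}}\bigl(-\grad \funns(\projM[\Minter](\vxinter))\bigr)$, which vanishes because $\grad \funns$ is tangent to $\Minter$ and is thus orthogonal to $\normal{\projM[\Minter](\vxinter)}{\Minter}$. Hence both $B(\vxinter,0)$ and $\partial_{t}B(\vxinter,0)$ are zero, and a uniform Taylor bound (again from smoothness of $\cm$ and the projector together with compactness) gives a constant $C_{B}$ with $\|B(\vxinter,t)\| \le C_{B}\,t^{2}$.

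Combining the two bounds via the triangle inequality yields the desired estimate with $\constcurve = C_{A}+C_{B}$. The only delicate point is securing uniformity of the constants over $\vxinter$ in a common neighborhood of $\crit[\vxinter]$, but this is automatic from the $\C^{2}$ regularity of $\Minter$ (which makes the normal projector smooth near $\crit[\vxinter]$), the smoothness of $\cm$, and compactness of $[0,\Tcurve]$.
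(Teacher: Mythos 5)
Your proof is correct and follows essentially the same route as the paper's: the paper differentiates $\theta(\vxinter,t)=\proj_{\normal{\cm(\vxinter,t)}{\Minter}}(\cm(\vxinter,t)-\vxinter)$ directly and sees its $t$-derivative at $0$ split (by the product rule) into exactly your two contributions — the projector-derivative term killed by the hypothesis and the projected velocity $-\grad\funns(\projM[\Minter](\vxinter))$ killed by tangency — before applying a uniform second-order Taylor bound. Your additive decomposition into $A+B$ with separate Taylor expansions and a triangle inequality is just a cosmetic reorganization of the same argument, so nothing further is needed.
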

\begin{proof}
  We denote $\theta(\vxinter, t) = \proj_{\normal{\cm(\vxinter, t)}{\Minter}}(\cm(\vxinter, t) - \vxinter)$. First,
  \begin{align}
      \frac{\dd}{\dd t}\theta(\vxinter, t)\vert_{t=0} = &\D \left(t \mapsto \proj_{\normal{\cm(\vxinter, t)}{\Minter}}(\projM[\Minter](\vxinter) - \vxinter)\right) \\
      &~~ + \proj_{\normal{\projM(\vxinter)}{\Minter}} \left(\D (t \mapsto (\cm(\vxinter, t) - \vxinter))(0)
    \right),
  \end{align}
  where the first term is null by assumption and the second is also null since it is the normal projection of the tangent vector $\grad \funns(\projM[\Minter](\vxinter))$. Thus, $\frac{\dd}{\dd t}\theta(\vxinter, t)\vert_{t=0} = 0$.   Using this fact and smoothness of $\theta$, Taylor's theorem with Lagrange remainder yields, for all $\vxinter\in\N_{\crit[\vxinter]}$, the existence of $\bar{t}\in [0, \Tcurve]$ such that, for all $t\in [0, \Tcurve]$,
  \begin{align}
    \theta(\vxinter, t) = \theta(\vxinter, 0) + \frac{t^{2}}{2} \frac{\dd^{2}}{\dd t^{2}}\theta(\vxinter, \bar{t}).
  \end{align}
  Therefore, for all $\vxinter \in \N_{\crit[\vxinter]}$ and $t \in [0, \Tcurve]$,
  \begin{align}
    \|\theta(\vxinter, t)\| \le \|\theta(\vxinter, 0)\| + \frac{t^{2}}{2} \sup_{\bar{t} \in [0, \Tcurve]} \frac{\dd^{2}}{\dd t^{2}}\theta(\vxinter, \bar{t}) \le \|\theta(\vxinter, 0)\| + t^{2} \constcurve,
  \end{align}
  where $\constcurve = \sup_{\vxinter \in \N_{\crit[\vxinter]}} \sup_{\bar{t} \in [0, \Tcurve]} \frac{\dd^{2}}{\dd t^{2}}\theta(\vxinter, \bar{t})$.
\end{proof}

We can now proceed with the proof of \cref{lemma:props}, divided into two parts corresponding to the two cases of the result.
The case $\funns = \max$ comes easily, due to the polyhedral nature of the function.

\begin{lemma}\label{lemma:maxofsmoothprops}
  Consider $\funns = \max$, a point $\crit[\vxinter] \in \interSpace$ and the corresponding structure manifold $\MI$ (of~\Cref{ex:maxofsmoothstruct}).
  Then \Cref{prop:normalascent,prop:curve} hold at $\crit[\vxinter]$.
\end{lemma}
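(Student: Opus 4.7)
The plan is to handle the two properties separately, exploiting the polyhedral (indeed affine) nature of the manifolds $\MI$.

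\textbf{Normal ascent.}
Fix $\crit[\vxinter]\in\interSpace$ and let $I=\{i : \crit[\vxinter]_i = \max(\crit[\vxinter])\}$, so that $\partial \max(\crit[\vxinter]) = \Conv\{e_i : i\in I\}$. I would first compute explicitly the tangent and normal spaces of $\MI$ at $\crit[\vxinter]$ from the manifold-defining map of \Cref{ex:maxofsmoothstruct}: the tangent space consists of vectors whose coordinates in $I$ are all equal (with no constraint on coordinates outside $I$), and the normal space is the set of vectors supported on $I$ whose entries sum to zero. The orthogonal projection of $e_i$ (for $i\in I$) on $\normal{\crit[\vxinter]}{\MI}$ is then $e_i - \frac{1}{|I|}\sum_{j\in I}e_j$, and the projection of the whole subdifferential is the $|I|$-simplex spanned by these vertices inside $\normal{\crit[\vxinter]}{\MI}$. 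By symmetry $0$ is the barycenter of that simplex, hence $0 \in \ri \proj_{\normal{\crit[\vxinter]}{\MI}} \partial \max(\crit[\vxinter])$, which is exactly \Cref{prop:normalascent}.

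\textbf{Curve property.}
I would appeal to \Cref{lemma:simplecurveproof}, which reduces the problem to checking that the map $t \mapsto \proj_{\normal{\cm(\vxinter,t)}{\MI}}(\projM[\MI](\vxinter) - \vxinter)$ has zero derivative at $t=0$. The key observation is that $\MI$ is an \emph{affine} subspace of $\interSpace$ (it is the solution set of the linear equations $\vxinter_i = \vxinter_j$ for $i,j\in I$). Consequently the normal space $\normal{p}{\MI}$ is the \emph{same} linear subspace for every $p\in\MI$, and the projector $\proj_{\normal{p}{\MI}}$ does not depend on $p$. In particular $t \mapsto \proj_{\normal{\cm(\vxinter,t)}{\MI}}(\projM[\MI](\vxinter) - \vxinter)$ is constant in $t$, so its derivative vanishes trivially, and \Cref{lemma:simplecurveproof} delivers \Cref{prop:curve}.

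No real obstacle arises here; the whole argument is bookkeeping on the polyhedral structure. The only point that deserves care is to write the projection of the subdifferential in a basis of $\normal{\crit[\vxinter]}{\MI}$ to rigorously conclude that $0$ lies in the \emph{relative} interior (rather than only in the convex hull), but this is immediate from the symmetric-simplex description above.
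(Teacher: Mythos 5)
Your proof is correct and follows essentially the same route as the paper's: the curve property is handled identically, by observing that $\MI$ is an affine subspace, so the normal projector is the same at every point of the manifold and \Cref{lemma:simplecurveproof} applies trivially. The only (minor) divergence is in the normal ascent part, where you verify $0 \in \ri \proj_{\normal{\crit[\vxinter]}{\MI}} \partial\max(\crit[\vxinter])$ directly by computing that the projected subdifferential is the simplex with vertices $e_i - \tfrac{1}{|I|}\sum_{j\in I}e_j$ and barycenter $0$, whereas the paper checks the equivalent dual condition that the directional derivative of $\max$ is positive along every nonzero normal direction; your version is, if anything, closer to the property as literally stated.
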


\begin{proof}
  \emph{Normal ascent}
  Take $\vxinter \in \MI$ for some active indices $I\subset \{1, \ldots, m\}$.
  A normal direction $d\in\normal{\vxinter}{\MI}$ is such that $d_{i} = 0$ for $i\notin I$ and $\sum_{i\in I}d_{i} = 0$.
  Thus $\max (\vxinter + td) = \vxinter_{i}+td_{i}$ with $i = \argmax_{i} d_{i}$, and $\D\max(\vxinter)[d] = \lim_{t\searrow 0}(\max(\vxinter+t d)-\max(\vxinter))/t = d_{i}>0$ for all $d\neq 0$.

  \emph{Curve assumption}
  Since the structure manifold of $\max$ are affine subspaces, the normal spaces are equal at all points of the manifold.
  Therefore the derivative of the projection at a parametrized point is null and~\Cref{lemma:simplecurveproof} provides the result.
\end{proof}

The case $\funns = \lammax$ is not difficult \emph{per se}, but requires a precise description of the geometry of the maximum eigenvalue function and its structure manifolds; we refer to~\cite{shapiroEigenvalueOptimization1995a,oustryLagrangianMaximumEigenvalue1999} for the derivation of these tools.

\begin{lemma}\label{lemma:eigmaxprops}
  Consider $\funns = \lammax$, a point $\crit[\vxinter] \in \Sym$ and the corresponding structure manifold $\Mr$ (of~\Cref{ex:eigmaxstruct}).
  Then \Cref{prop:normalascent,prop:curve} hold at $\crit[\vxinter]$.
\end{lemma}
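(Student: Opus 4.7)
My plan is to mirror the structure of \cref{lemma:maxofsmoothprops}, treating the two properties separately and adapting the computations to the spectral geometry of $\lammax$. Fix $\crit[\vxinter] \in \Mr$ with spectral decomposition $\crit[\vxinter] = E \Diag(\lambda) E^{\top}$, and write $P_{0} = E_{1:r} E_{1:r}^{\top}$ for the orthogonal projector onto the top eigenspace; the assignment $p \mapsto P(p)$ is well-defined and smooth in a neighborhood of $\crit[\vxinter]$ on $\Mr$ because the multiplicity of the top eigenvalue is locally constant there. From the subdifferential formula of \cref{ex:eigmaxstruct} together with the sharpness of partial smoothness, the normal space admits the clean description $\normal{\crit[\vxinter]}{\Mr} = \{E_{1:r} S E_{1:r}^{\top} : S \in \Sym[r],\ \trace S = 0\}$, and a short verification shows that the orthogonal projector onto it reads $M \mapsto P_{0} M P_{0} - \tfrac{1}{r}\trace(P_{0} M P_{0})\,P_{0}$.

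For \cref{prop:normalascent}, I would simply subtract the scalar component of the subdifferential to obtain $\proj_{\normal{\crit[\vxinter]}{\Mr}} \partial \lammax(\crit[\vxinter]) = \{E_{1:r}(Z - \tfrac{1}{r} I_{r}) E_{1:r}^{\top} : Z \succeq 0,\ \trace Z = 1\}$. The origin corresponds to the choice $Z = I_{r}/r$, which is \emph{positive definite} and hence lies in the relative interior of the spectraplex $\{Z \in \Sym[r] : Z \succeq 0,\ \trace Z = 1\}$. Since the affine map $Z \mapsto E_{1:r}(Z - I_{r}/r) E_{1:r}^{\top}$ is injective and preserves relative interiors, we conclude $0 \in \ri \proj_{\normal{\crit[\vxinter]}{\Mr}} \partial \lammax(\crit[\vxinter])$.

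For \cref{prop:curve}, I would invoke \cref{lemma:simplecurveproof}. A direct trace pairing singles out the unique element of the affine span of $\partial \lammax(p)$ that lies in $\tangent{p}{\Mr}$, yielding $\grad \lammax(p) = \tfrac{1}{r} P(p)$ for $p \in \Mr$; consequently, the initial velocity of the curve $\cm$ is $\dot \cm(\vxinter, 0) = -\tfrac{1}{r} P(p_{0})$ with $p_{0} = \projM[\Mr](\vxinter)$. Because the explicit formula for the normal projector depends polynomially—and in particular smoothly—on $P(p)$, the chain rule reduces the hypothesis of \cref{lemma:simplecurveproof} to the single identity $\D_{t} P(\cm(\vxinter, t))\vert_{t=0} = 0$.

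This last vanishing is the main obstacle and the crux of the proof. It rests on an eigenspace perturbation argument: because the first-order direction $-\tfrac{1}{r} P(p_{0})$ commutes with $p_{0}$ (they share the eigenbasis $E$), the expansion $\cm(\vxinter, t) = p_{0} - \tfrac{t}{r} P(p_{0}) + O(t^{2})$ shifts the top $r$ eigenvalues uniformly by $-t/r$ while leaving the lower eigenvalues and all eigenspaces untouched at first order. Standard eigenprojector perturbation (in the spirit of \cite{oustryLagrangianMaximumEigenvalue1999}) then gives $P(\cm(\vxinter, t)) = P(p_{0}) + O(t^{2})$, so $\D_{t} P(\cm(\vxinter, t))\vert_{t=0} = 0$ indeed, and \cref{lemma:simplecurveproof} delivers \cref{prop:curve}. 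The key algebraic fact is the proportionality of $\grad \lammax(p)$ to the top eigenprojector: for a generic tangent direction the top eigenspace would rotate at first order and this stationarity would fail, so the argument is tightly bound to the specific curve prescribed by the Riemannian gradient.
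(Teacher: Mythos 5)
Your proof is correct and follows essentially the same route as the paper: both arguments rest on the facts that $I_{r}/r$ is an interior point of the spectraplex and that $\grad \lammax(p) = \tfrac{1}{r}P(p)$, and both verify \cref{prop:curve} via \cref{lemma:simplecurveproof} by showing the top eigenprojector is stationary to first order along the gradient direction (the paper does this through the smooth eigenbasis $U(t)$ and the formula for $\D U_i(0)$, you through the equivalent observation that $p_0 - \tfrac{t}{r}P(p_0)$ has an unchanged top eigenspace). Your treatment of \cref{prop:normalascent} is marginally more direct, exhibiting $0$ as the image of the relative-interior point $I_r/r$ under an injective affine map, where the paper instead checks positivity of the directional derivative along every nonzero normal direction; the two are equivalent here by sharpness.
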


\begin{proof}
  \emph{Normal ascent}
  Take $\vxinter\in \Mr$, let $U\in\bbR^{m\times r}$ denote a basis of the first eigenspace of matrix $\vxinter$ and $d\in \normal{\vxinter}{\Mr}$.
  The normal space at $\vxinter\in\Mr$ writes (\cite[Th. 4.3, Cor. 4.8]{oustryLagrangianMaximumEigenvalue1999})
  \begin{align}
    \normal{\vxinter}{\Mr} = \{ U(\vxinter)ZU(\vxinter)^{\top}, Z \in \Sym[r], \trace(Z) = 0\}.
  \end{align}
  Therefore, $d = UZU^{\top}$ for some $Z\in\Sym[r]$ such that $\trace(Z) = 0$.
  Let $s = U (I/r + \alpha Z) U^{\top}$ where $\alpha >0$ is small enough so that $s$ is positive definite.
  Since $s$ has also unit trace, it is a subgradient of $\lammax$ at $\vxinter$~\cite[Th. 4.1]{oustryLagrangianMaximumEigenvalue1999}.
  Thus $\lammax'(\vxinter; d) = \sup_{v\in\partial \lammax(\vxinter)}\langle v, d \rangle \ge \langle s, d \rangle = \langle I/r + \alpha Z, Z \rangle = \alpha \|Z\|^{2}$\GBreplace{.
  Hence}{, which yields} $\lammax'(\vxinter; d) >0$ for any $d\in \normal{\vxinter}{\Mr}\setminus\{0\}$.

  \emph{Curve assumption}
  Let $\crit[\vxinter] \in \Mr$. For any $\vxinter \in \Sym$, we denote by $P(\vxinter)$ the orthogonal projection on the eigenspace corresponding to the $r$ largest eigenvalues of $\vxinter$ (counting multiplicities).
  This operator is smooth.
  We can define a mapping $U:\Sym\to\bbR^{m\times r}$ such that: $U(\vxinter)^{\top}U(\vxinter) = I_{r}$, $P(\vxinter) = U(\vxinter)U(\vxinter)^{\top}$, $U$ is smooth near our reference point $\crit[\vxinter]$ and its derivative at $\crit[\vxinter]$ satisfies $\D U(\crit[\vxinter])^{\top} U(\crit[\vxinter]) = 0$.
  The mapping $U$ defines a smooth orthonormal basis of the eigenspace corresponding to the $r$ largest eigenvalues~\cite[p. 557]{shapiroEigenvalueOptimization1995a}.
  Finally, for a point $\vxinter'\in\Mr$, the projection of $d\in\Sym$ on $\normal{\vxinter'}{\Mr}$ writes
  \begin{align}
    \proj_{\normal{\vxinter'}{\Mr}}\left( d \right) = U(\vxinter') \left\{U(\vxinter')^{\top}d U(\vxinter') - \frac{1}{r}\trace ( U(\vxinter')^{\top}d U(\vxinter') )I_{r}\right\} U(\vxinter')^{\top}.
  \end{align}

  Now, fix $\vxinter$ near $\crit[\vxinter]$, consider the eigenbasis $U$ with reference point $\cm(\vxinter, 0)=\projM[\Mr](\vxinter)$.
  Following~\Cref{lemma:simplecurveproof}, let $\nu: t \mapsto \proj_{\normal{\cm(\vxinter, t)}{\Mr}}(d)$ with $d = \projM[\Mr](\vxinter) - \vxinter$.
  We can now give an explicit expression of $\nu(t)$ and show that $\frac{\dd}{\dd t}\nu(0)$ is null.
  Denoting $U(t) = U(\cm(\vxinter, t))$, we have
  \begin{align}\label{eq:curveproof}
    \nu(t) = U(t) \underbrace{\left\{U(t)^{\top}d U(t) - \frac{1}{r}\trace ( U(t)^{\top}d U(t) )I_{r}\right\}}_{\defeq \chi(t)} U(t)^{\top}.
  \end{align}

  First, as $d$ is a normal vector to $\Mr$ at point $\projM[\Mr](\vxinter)$, there exists $Z\in\Sym[r]$ such that $d = U(0) Z U(0)^{\top}$.
  Using that $\D U(0)^{\top} U(0) = 0$ yields
  \begin{align}
    \D U(0)^{\top} d U(0) = \D U(0)^{\top} U(0) Z U(0)^{\top} U(0) = 0.
  \end{align}
  Then, one readily checks that $U(0) \D \chi(0) U(0) = 0$.

  We turn to the term $\D U(0) \chi(0) U(0)^{\top}$.
  A quick computation from the eigen decomposition of $\vxinter$ shows that $d$ writes $U(0) Z U(0)^{\top}$, where $Z$ is actually diagonal.
  Therefore, $\chi(0) = Z - (1/r)\trace(Z) I_{r}$ is a diagonal matrix, so that
  \begin{align}
    \D U(0) \chi(0) U(0)^{\top} = \sum_{i=1}^r \chi(0)_{ii} \D U_{i}(0) U_{i}(0)^{\top}.
  \end{align}
  Following~\cite{shapiroEigenvalueOptimization1995a}, the differential of $t \mapsto U(\cm(\vxinter, t))$ at $t=0$ writes
  \begin{align}
    \D U_{i}(0) = \sum_{k=r+1}^m \frac{1}{\lambda_{1} - \lambda_{k}} U_{k}(0) U_{k}(0)^{\top} \eta U_{i}(0),
  \end{align}
  with $\eta = \grad \lammax(\projM[\Mr](\vxinter))$.
  Using that $\lammax(y) = (1/r) \sum_{i=1}^r U_{i}(y)^{\top} y U_{i}(y)$, we compute the Riemannian gradient (see~\cite[Sec.~7.7]{boumal2022intromanifolds}):
  \begin{equation}
    \grad \lammax(y) = \frac{1}{r} \sum_{i=1}^r U_{i}(y)^{\top}U_{i}(y).
  \end{equation}
  By orthogonality of the smooth basis of eigenvectors, the terms $U_{k}(0)^{\top}U_{i}(0)$ vanish for all $i\in \{1, \ldots, r\}$ and $k \in \{r+1, \ldots, m\}$.
  We get that $\D U(0) \chi(0) U(0)^{\top} = 0$, and thus that $\D \nu(0) = 0$.
  Thus,   \Cref{lemma:simplecurveproof} applies and yields the result.
\end{proof}

\section{Numerical experiments in high precision}
\label{apx:highprec} We report in~\Cref{fig:suboptvstimeBigFloat} the evolution of suboptimality versus computing time, for the same problems and algorithms as in \cref{sec:numexps}, but with a high precision floating type. Indeed, the flexibility of the Julia language allows to use the same implementation with the high precision \texttt{BigFloat} type, which precision is $1.73\cdot 10^{-72}$, or the usual \texttt{Float64} type, which precision is $2.22\cdot 10^{-16}$.

\begin{figure}[h]
  \centering
  \begin{subfigure}[t]{0.49\textwidth}
    \centering
    \includetikzexpenumcaption{maxquadBGLS_BigFloat_time_subopt}{\vspace*{-2ex}MaxQuad}
  \end{subfigure}\hfill
  \begin{subfigure}[t]{0.49\textwidth}
    \centering
    \includetikzexpenumcaption{eigmax_BigFloat_time_subopt}{\vspace*{-2ex}Eigmax affine}
  \end{subfigure} \\ \vspace{0.2cm}
  \begin{subfigure}[t]{\textwidth}
    \centering
    \includetikzexpenum[0.6]{legend_subopt_time}
  \end{subfigure} %
  \caption{Suboptimality vs time (s)\label{fig:suboptvstimeBigFloat}\vspace*{-2ex}}
\end{figure}

\section*{Acknowledgments}
This work is funded by the ANR JCJC project STROLL (ANR-19-CE23-0008) and MIAI@Grenoble Alpes (ANR-19-P3IA-0003). \FIedit{We thank the three anonymous referees and the associate editor for their improvement suggestions that lead to a better readability and exposition of the paper.}

\bibliographystyle{siamplain}
\bibliography{references}

\def\cprime{$'$} \def\cdprime{$''$} \def\cprime{$'$} \def\cprime{$'$}
  \def\cprime{$'$} \def\cdprime{$''$} \def\cprime{$'$} \def\cprime{$'$}
\begin{thebibliography}{10}

\bibitem{bareilles2020newton}
{\sc G.~Bareilles, F.~Iutzeler, and J.~Malick}, {\em Newton acceleration on
  manifolds identified by proximal gradient methods}, Mathematical Programming,
   (2022), \url{https://doi.org/10.1007/s10107-022-01873-w}.

\bibitem{bezanson2017julia}
{\sc J.~Bezanson, A.~Edelman, S.~Karpinski, and V.~B. Shah}, {\em Julia: A
  fresh approach to numerical computing}, SIAM review, 59 (2017), pp.~65--98.

\bibitem{bolteMultiproximalLinearizationMethod2020}
{\sc J.~Bolte, Z.~Chen, and E.~Pauwels}, {\em The multiproximal linearization
  method for convex composite problems}, Mathematical Programming, 182 (2020),
  pp.~1--36, \url{https://doi.org/10.1007/s10107-019-01382-3}.

\bibitem{bolteMajorizationMinimizationProceduresConvergence2016}
{\sc J.~Bolte and E.~Pauwels}, {\em Majorization-{{Minimization Procedures}}
  and {{Convergence}} of {{SQP Methods}} for {{Semi-Algebraic}} and {{Tame
  Programs}}}, Mathematics of Operations Research, 41 (2016), pp.~442--465,
  \url{https://doi.org/10.1287/moor.2015.0735}.

\bibitem{bonnans2006numerical}
{\sc J.-F. Bonnans, J.~C. Gilbert, C.~Lemar{\'e}chal, and C.~A.
  Sagastiz{\'a}bal}, {\em Numerical optimization: theoretical and practical
  aspects}, Springer Science \& Business Media, 2006.

\bibitem{boumal2022intromanifolds}
{\sc N.~Boumal}, {\em An introduction to optimization on smooth manifolds}.
\newblock To appear with Cambridge University Press, Jun 2022,
  \url{https://www.nicolasboumal.net/book}.

\bibitem{burke2020gradient}
{\sc J.~V. Burke, F.~E. Curtis, A.~S. Lewis, M.~L. Overton, and L.~E.
  Sim{\~o}es}, {\em Gradient sampling methods for nonsmooth optimization}, in
  Numerical Nonsmooth Optimization, Springer, 2020, pp.~201--225.

\bibitem{daniilidis2006geometrical}
{\sc A.~Daniilidis, W.~Hare, and J.~Malick}, {\em Geometrical interpretation of
  the predictor-corrector type algorithms in structured optimization problems},
  Optimization, 55 (2006), pp.~481--503.

\bibitem{drusvyatskiyNonsmoothOptimizationUsing2021}
{\sc D.~Drusvyatskiy, A.~D. Ioffe, and A.~S. Lewis}, {\em Nonsmooth
  optimization using {{Taylor-like}} models: Error bounds, convergence, and
  termination criteria}, Mathematical Programming, 185 (2021), pp.~357--383,
  \url{https://doi.org/10.1007/s10107-019-01432-w}.

\bibitem{hanSurveyDescentMultipoint2021}
{\sc X.~Y. Han and A.~S. Lewis}, {\em Survey {{Descent}}: {{A Multipoint
  Generalization}} of {{Gradient Descent}} for {{Nonsmooth Optimization}}},
  (2021), p.~29.

\bibitem{hare2004identifying}
{\sc W.~Hare and A.~S. Lewis}, {\em Identifying active constraints via partial
  smoothness and prox-regularity}, Journal of Convex Analysis, 11 (2004),
  pp.~251--266.

\bibitem{hare2009computing}
{\sc W.~Hare and C.~Sagastiz{\'a}bal}, {\em Computing proximal points of
  nonconvex functions}, Mathematical Programming, 116 (2009), pp.~221--258.

\bibitem{helmbergSpectralBundleMethod2014}
{\sc C.~Helmberg, M.~Overton, and F.~Rendl}, {\em The spectral bundle method
  with second-order information}, Optimization Methods and Software, 29 (2014),
  pp.~855--876, \url{https://doi.org/10.1080/10556788.2013.858155}.

\bibitem{hiriart-lemarechal-1993}
{\sc J.-B. Hiriart-Urruty and C.~{Lemar\'echal}}, {\em Convex Analysis and
  Min\-im\-iz\-a\-tion Alg\-or\-ithms}, Springer Verlag, Heidelberg, 1993.
\newblock Two volumes.

\bibitem{leeAcceleratingInexactSuccessive2021}
{\sc C.-p. Lee}, {\em Accelerating {{Inexact Successive Quadratic
  Approximation}} for {{Regularized Optimization Through Manifold
  Identification}}}, arXiv:2012.02522 [math],  (2021),
  \url{https://arxiv.org/abs/2012.02522}.

\bibitem{leeIntroductionSmoothManifolds2003}
{\sc J.~M. Lee}, {\em Introduction to {{Smooth Manifolds}}}, Graduate {{Texts}}
  in {{Mathematics}}, {Springer-Verlag}, {New York}, 2003,
  \url{https://doi.org/10.1007/978-0-387-21752-9}.

\bibitem{lewis2022identifiability}
{\sc A.~Lewis and T.~Tian}, {\em Identifiability, the kl property in metric
  spaces, and subgradient curves}, arXiv preprint arXiv:2205.02868,  (2022).

\bibitem{lewisSimpleNewtonMethod2019}
{\sc A.~Lewis and C.~Wylie}, {\em A simple {{Newton}} method for local
  nonsmooth optimization}, arXiv:1907.11742 [cs, math],  (2019),
  \url{https://arxiv.org/abs/1907.11742}.

\bibitem{lewis2002active}
{\sc A.~S. Lewis}, {\em Active sets, nonsmoothness, and sensitivity}, SIAM
  Journal on Optimization, 13 (2002), pp.~702--725.

\bibitem{lewisNonsmoothOptimizationQuasiNewton2013}
{\sc A.~S. Lewis and M.~L. Overton}, {\em Nonsmooth optimization via
  quasi-{{Newton}} methods}, Mathematical Programming, 141 (2013),
  pp.~135--163, \url{https://doi.org/10.1007/s10107-012-0514-2}.

\bibitem{lewis2016proximal}
{\sc A.~S. Lewis and S.~J. Wright}, {\em A proximal method for composite
  minimization}, Mathematical Programming, 158 (2016), pp.~501--546.

\bibitem{lewisPartialSmoothnessTilt2013}
{\sc A.~S. Lewis and S.~Zhang}, {\em Partial {{Smoothness}}, {{Tilt
  Stability}}, and {{Generalized Hessians}}}, SIAM Journal on Optimization, 23
  (2013), pp.~74--94, \url{https://doi.org/10.1137/110852103}.

\bibitem{mifflin2005algorithm}
{\sc R.~Mifflin and C.~Sagastiz{\'a}bal}, {\em A $\mathcal{VU}$-algorithm for
  convex minimization}, Mathematical programming, 104 (2005), pp.~583--608.

\bibitem{miller2005newton}
{\sc S.~A. Miller and J.~Malick}, {\em Newton methods for nonsmooth convex
  minimization: connections among-lagrangian, riemannian newton and sqp
  methods}, Mathematical programming, 104 (2005), pp.~609--633.

\bibitem{nocedal2006numerical}
{\sc J.~Nocedal and S.~Wright}, {\em Numerical optimization}, Springer Science
  \& Business Media, 2006.

\bibitem{noll2005spectral}
{\sc D.~Noll and P.~Apkarian}, {\em Spectral bundle methods for non-convex
  maximum eigenvalue functions: second-order methods}, Mathematical
  Programming, 104 (2005), pp.~729--747.

\bibitem{nollSpectralBundleMethods2005}
{\sc D.~Noll and P.~Apkarian}, {\em Spectral bundle methods for non-convex
  maximum eigenvalue functions: Second-order methods}, Mathematical
  Programming, 104 (2005), pp.~729--747,
  \url{https://doi.org/10.1007/s10107-005-0635-y}.

\bibitem{oustryLagrangianMaximumEigenvalue1999}
{\sc F.~Oustry}, {\em The $\mathcal{U}$-{{Lagrangian}} of the {{Maximum
  Eigenvalue Function}}}, SIAM Journal on Optimization, 9 (1999), pp.~526--549,
  \url{https://doi.org/10.1137/S1052623496311776}.

\bibitem{rockafellar2009variational}
{\sc R.~T. Rockafellar and R.~J.-B. Wets}, {\em Variational analysis},
  vol.~317, Springer Science \& Business Media, 2009.

\bibitem{sagastizabalCompositeProximalBundle2013}
{\sc C.~Sagastiz{\'a}bal}, {\em Composite proximal bundle method}, Mathematical
  Programming, 140 (2013), pp.~189--233,
  \url{https://doi.org/10.1007/s10107-012-0600-5}.

\bibitem{shapiroClassNonsmoothComposite2003}
{\sc A.~Shapiro}, {\em On a {{Class}} of {{Nonsmooth Composite Functions}}},
  Mathematics of Operations Research, 28 (2003), pp.~677--692,
  \url{https://doi.org/10.1287/moor.28.4.677.20512}.

\bibitem{shapiroEigenvalueOptimization1995a}
{\sc A.~Shapiro and M.~K.~H. Fan}, {\em On {{Eigenvalue Optimization}}}, SIAM
  Journal on Optimization, 5 (1995), pp.~552--569,
  \url{https://doi.org/10.1137/0805028}.

\bibitem{womersleyAlgorithmCompositeNonsmooth1986}
{\sc R.~S. Womersley and R.~Fletcher}, {\em An algorithm for composite
  nonsmooth optimization problems}, Journal of Optimization Theory and
  Applications, 48 (1986), pp.~493--523,
  \url{https://doi.org/10.1007/BF00940574}.

\end{thebibliography}

\end{document}